\newtheorem{defn}{Definition}[section]
\newtheorem{thm}[defn]{Theorem}
\newtheorem{lem}[defn]{Lemma}
\newtheorem{cor}[defn]{Corollary}
\newtheorem{ex}[defn]{Example}
\newtheorem{re}[defn]{Remark}
\begin{document}
\title{{\bf Derivations on semi-simple Jordan algebras and its applications}}
\author{\normalsize \bf Chenrui Yao,  Yao Ma,  Liangyun Chen}
\date{{\small{ School of Mathematics and Statistics,  Northeast Normal University,\\ Changchun 130024, CHINA
}}} \maketitle
\date{}

   {\bf\begin{center}{Abstract}\end{center}}

In this paper, we mainly study the derivation algebras of semi-simple Jordan algebras over a field of characteristic $0$ and give sufficient and necessary conditions that the derivation algebras of them are simple. As an application, we prove that for a semi-simple Jordan algebra $J$, $TDer(Der(J)) = Der(Der(J)) = ad(Der(J))$ under some assumptions. Moreover, we also show that for a semi-simple Jordan algebra $J$ which has a finite basis over a field of characteristic $0$, $TDer(J) = Der(J) = Inn(J)$. This is a corollary about our theorem which concerns Jordan algebras with unit.

\noindent\textbf{Keywords:} \,  Derivations, Triple derivations, Jordan algebras\\
\textbf{2010 Mathematics Subject Classification:} 17C20, 17C10, 17C99.
\renewcommand{\thefootnote}{\fnsymbol{footnote}}
\footnote[0]{ Corresponding author(L. Chen): chenly640@nenu.edu.cn.}
\footnote[0]{Supported by  NNSF of China (Nos. 11771069 and 11801066), NSF of Jilin province (No. 20170101048JC), the project of Jilin province department of education (No. JJKH20180005K) and the Fundamental Research Funds for the Central Universities(No. 130014801).}

\section{Introduction}

An algebra $J$ over a field $\rm{F}$ is called a Jordan algebra if
\begin{gather*}
x \circ y = y \circ x,\quad\forall x, y \in J,\\
(x^{2} \circ y) \circ x = x^{2} \circ (x \circ y),\quad x^{2} = x \circ x ,\quad\forall x, y \in J.
\end{gather*}
In 1933, this kind of algebras first appeared in a paper by P. Jordan in his study of quantum mechanics. Subsequently, P. Jordan, J. von Neumann and E. Wigner introduced finite-dimensional formally real Jordan algebras for quantum mechanics formalism in \cite{JNW}. All simple finite-dimensional Jordan algebras over an algebraically closed field $\rm{F}$ of characteristic different from 2 were classified some years later by A. A. Albert in \cite{A1}, using the idempotent method. Since then, Jordan algebras were found various applications in mathematics and theoretical physics (see \cite{1,2,3,4} and references therein) and now form an intrinsic part of modern algebra. Latest results on Jordan algebras refer to \cite{5,6,7} and references therein.

Jordan systems arise naturally as ``coordinates" for Lie algebras having a grading into $3$ parts. Over the years, many predecessors have succeeded in generalizing some of the results of Lie algebras to Jordan algebras. In \cite{A1}, A. A. Albert proved Lie's theorem, Engel's theorem and Cartan's theorem for Jordan algebras. In \cite{N1}, N. Jacobson successfully showed that $Der(J) = Inn(J)$ where $J$ was a semi-simple Jordan algebra over a field of characteristic zero. In \cite{M2}, D. J. Meng proved that if a centerless Lie algebra $L$ had the decomposition $L = L_{1} \oplus L_{2}$, then the derivation algebra of $L$ also had decomposition $Der(L) = Der(L_{1}) \oplus Der(L_{2})$. In this paper, we successfully generalize this result to Jordan algebras(see Theorem \ref{thm:2.2}).

However, there are also some results which couldn't be generalized from Lie algebras to Jordan algebras. For instance, it is well known that a simple Lie algebra $L$ is isomorphic to its derivation algebra $Der(L)$. The above results doesn't hold in the case of Jordan algebras, since the derivation algebras of Jordan algebras are actually Lie algebras rather than Jordan algebras. Moreover, we also know that the derivation algebras of simple Lie algebras are simple. Whether it is also true in simple Jordan algebras or not? The answer is not, which can be deduced by Theorem \ref{thm:2.8}. Since the result is not necessarily true, the condition for a simple Jordan algebra with simple derivation algebra is given in Theorem \ref{thm:2.6}, \ref{thm:2.8} and \ref{thm:2.10}. In this paper, we mainly study simple Jordan algebras over a field whose characteristic is $0$. And we give the sufficient and necessary conditions that the derivation algebras of them are simple.

Derivations have been a historic and widely studied subject for many years. Recall that a derivation on an algebra $A$ is a linear map $D : A \rightarrow A$ satisfying
\[D(xy) = D(x)y + xD(y),\quad\forall x, y \in A.\]
In \cite{N2}, N. Jacobson showed that every nilpotent Lie algebra had a derivation $D$ which was not inner. Moreover, he also proved that if $L$ was a finite dimensional Lie algebra over the field $\rm{F}$ such that the killing form of $L$ was non-degenerate, then the derivations of $L$ were all inner in \cite{N3}. As a generalization, S. Berman proved that if $L$ denoted Lie algebras which generalized the split simple Lie algebras, then the dimension of $Der(L)/ad(L)$ equaled the nullity of the Cartan matrix which defined $L$ in \cite{B1}.

As a nature generalization of derivations, triple derivations appeared in order to study associative algebras(rings). A linear map $D : L \rightarrow L$ where $L$ is a Lie algebra, is called a triple derivation on $L$ if it satisfies
\[D([[x, y], z]) = [[D(x), y], z] + [[x, D(y)], z] + [[x, y], D(z)],\quad\forall x, y, z \in L.\]
Similarly, one can define triple derivations on Jordan algebras. Obviously, derivations are triple derivations. Naturally, one may has such a question that if triple derivations are all derivations. The answer is not trivial. In \cite{Z1}, J. H. Zhou came to a conclusion that if $L$ denoted a centerless perfect Lie algebra over a field whose characteristic was not $2$, then every triple derivation on $L$ was a derivation. What is the answer to this question in Jordan algebras? In this paper, we will study on Jordan algebras with unit and show that triple derivations are all derivations in the case of the characteristic of the basic field is not $2$. As an application, this result is valid on semi-simple Jordan algebras over a field of characteristic $0$ since a semi-simple Jordan algebra over a field of characteristic $0$ has the unique unit.

In the following, we'll give the definition of the center of a Jordan algebra, which is denoted by $C(J)$. The center of a Jordan algebra $J$ is the set
\[\{a \in J | (a \circ x) \circ y = (a \circ y) \circ x = (x \circ y) \circ a, \forall x, y \in J\}.\]

The paper is organised as follows: In Section \ref{se:2}, we'll prove that our main theorem, Theorem \ref{thm:2.2}, which is a generalization of a famous theorem in Lie algebras. Applying this theorem, we can reduce the study of the derivation algebras of semi-simple Jordan algebras to simple Jordan algebras. In the following, we'll study four kinds of simple Jordan algebras over a field of characteristic $0$ respectively and give the sufficient and necessary conditions that the derivation algebras of them are simple, Theorem \ref{thm:2.6} and \ref{thm:2.8}, which can be viewed as the most important results in this paper. In Section \ref{se:3}, we study Jordan algebras with unit over a field of characteristic not $2$ and prove that for such Jordan algebra $J$, $TDer(J) = Der(J)$(see Theorem \ref{thm:3.4}). As an application, we get a corollary that for a semi-simple Jordan algebra $J$ over a field of characteristic $0$, $TDer(J) = Der(J) = Inn(J)$(see Corollary \ref{cor:3.7}). In Section \ref{se:4}, we also show that under some assumptions, $TDer(Der(J)) = Der(Der(J)) = ad(Der(J))$ where $J$ is a semi-simple algebra, i.e., Theorem \ref{thm:4.7}.

\section{Derivation algebras of semi-simple Jordan algebras}\label{se:2}

\begin{lem}\cite{N1}\label{le:2.1}
Let $J$ be a semi-simple Jordan algebra over a field $\rm{F}$. Then $J$ has the decomposition $J = \oplus^{s}_{i = 1}J_{i}$ where $J_{i}(1 \leq i \leq s)$ are simple ideals of $J$.
\end{lem}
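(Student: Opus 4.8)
The plan is to realize semisimplicity through a nondegenerate invariant trace form and then run the standard orthogonal-complement splitting, closing with an induction on dimension. Throughout I work with $J$ finite-dimensional over a field of characteristic $0$, which is the setting in which the structural criteria below hold.

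First I would recall the working notion of semisimplicity: $J$ is semisimple exactly when its radical $\mathrm{Rad}(J)$ (the maximal solvable, equivalently maximal nilpotent, ideal) vanishes. The key structural input I would quote from the Jordan structure theory is the existence of a symmetric bilinear \emph{trace form} $\tau$ on $J$, built from the left-multiplication operators $L_x$ (where $L_x(z)=x\circ z$), with two properties: $\tau$ is \emph{associative}, meaning $\tau(u\circ v, w)=\tau(u, v\circ w)$ for all $u,v,w$, and $\tau$ is \emph{nondegenerate precisely when $J$ is semisimple}. This second property is the genuine Cartan-type criterion, and it is the place where characteristic $0$ is essential.

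Granting these, the decomposition is largely formal. For an arbitrary ideal $I\subseteq J$, I would show that the orthogonal complement $I^{\perp}=\{x\in J:\tau(x,I)=0\}$ is again an ideal: for $x\in I^{\perp}$, $a\in J$ and $y\in I$, commutativity gives $a\circ x=x\circ a$, and associativity gives $\tau(x\circ a,y)=\tau(x,a\circ y)$, which is $0$ since $a\circ y\in I$; hence $a\circ x\in I^{\perp}$. Next I would observe that $I\cap I^{\perp}$ is an ideal on which $\tau$ restricts to $0$; any such totally isotropic ideal is contained in $\mathrm{Rad}(J)=0$, so $I\cap I^{\perp}=0$, and a dimension count yields the internal direct sum $J=I\oplus I^{\perp}$ of ideals. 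Iterating, I would take $J_1$ a minimal nonzero ideal, split $J=J_1\oplus J_1^{\perp}$, note that $J_1^{\perp}$ inherits a nondegenerate $\tau$ and is therefore again semisimple, and apply the inductive hypothesis to $J_1^{\perp}$.

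It remains to see that each minimal factor $J_i$ is \emph{simple}, not merely minimal as an ideal of $J$. Here I would use that in the direct sum one has $J_i\circ J_k=0$ for $i\neq k$, so every ideal of $J_i$ regarded as an algebra is automatically an ideal of the ambient $J$ contained in $J_i$; minimality then forbids any proper nonzero ideal, and $J_i\circ J_i\neq 0$ (otherwise $J_i$ would be a nonzero nilpotent ideal, contradicting semisimplicity), so $J_i$ is simple. The main obstacle is not this bookkeeping but the two quoted inputs — the associativity of $\tau$ and, above all, the equivalence between nondegeneracy of $\tau$ and vanishing of the radical — since establishing the latter from scratch is essentially the full structure theory of Jacobson \cite{N1} that the statement is invoking.
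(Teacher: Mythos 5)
The paper does not actually prove this lemma --- it is quoted from Jacobson \cite{N1} (and the decomposition itself goes back to Albert's structure theory \cite{A1}) --- so there is no internal argument to compare yours against; what you have written is a reconstruction of the standard proof from that literature, and its outline is correct: associativity of $\tau$ makes $I^{\perp}$ an ideal, the Cartan-type criterion kills the isotropic ideal $I\cap I^{\perp}$, induction on dimension splits off minimal ideals, and the annihilation $J_{i}\circ J_{k}=0$ for $i\neq k$ promotes algebra-ideals of a summand to ideals of $J$, so minimality together with $J_{i}\circ J_{i}\neq 0$ yields simplicity. Three remarks. First, you prove the lemma under finite-dimensionality and characteristic $0$, hypotheses the statement (as loosely worded in the paper) does not display; this is in fact the setting of \cite{N1} and of every later use in the paper (e.g.\ Corollary \ref{cor:3.7}), but you should say explicitly that you are proving it in that regime. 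Second, your two quoted inputs are of very unequal depth: associativity of the trace form follows from linearizations of the Jordan identity, whereas ``a totally isotropic ideal lies in $\mathrm{Rad}(J)$'' (equivalently, nondegeneracy of $\tau$ characterizes semisimplicity) is Albert's trace criterion, which is essentially the entire content of the cited structure theory --- deferring to it is legitimate here, since the lemma is itself a citation, but your proof is a reduction to that criterion rather than an independent argument. Third, one verification should be made explicit rather than assumed: that the restriction of $\tau$ to an ideal $K$ of $J$ coincides with the trace form of $K$ as an algebra (true because $L_{x}$ for $x\in K$ annihilates $J/K$, so all traces can be computed on $K$); you need this both when applying the criterion to $I\cap I^{\perp}$ and when asserting that $J_{1}^{\perp}$ ``inherits'' semisimplicity --- for the latter, an alternative shortcut is to note that $\mathrm{Rad}(J_{1}^{\perp})$ is an ideal of all of $J$ because $J_{1}\circ J_{1}^{\perp}=0$, hence lies in $\mathrm{Rad}(J)=0$.
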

\begin{thm}\label{thm:2.2}
Suppose that $J$ is a Jordan algebra and has a decomposition $J = J_{1} \oplus J_{2}$, where $J_{1}$, $J_{2}$ are ideals of $J$. Then
\begin{enumerate}[(1)]
\item $C(J) = C(J_{1}) \dotplus C(J_{2})$;
\item If\; $C(J) = \{0\}$, then $Der(J) = Der(J_{1}) \oplus Der(J_{2})$.
\end{enumerate}
\end{thm}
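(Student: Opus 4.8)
The plan is to prove the two statements in order, using the first to set up notation for the second.

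For part (1), I would argue by double inclusion. The direction $C(J_1) \dotplus C(J_2) \subseteq C(J)$ should be the easier one: given $a = a_1 + a_2$ with $a_i \in C(J_i)$, I would verify the centrality condition $(a \circ x)\circ y = (a\circ y)\circ x = (x\circ y)\circ a$ for arbitrary $x = x_1 + x_2$, $y = y_1 + y_2$ in $J$, using that $J_1 \circ J_2 = \{0\}$ (which follows since $J_1, J_2$ are ideals with $J_1 \cap J_2 = \{0\}$) to reduce each product to its components, where centrality of $a_i$ in $J_i$ applies. For the reverse inclusion $C(J) \subseteq C(J_1)\dotplus C(J_2)$, I would take $a \in C(J)$, write $a = a_1 + a_2$, and show each $a_i \in C(J_i)$ by testing the center condition against elements $x, y$ lying entirely in $J_i$; again the vanishing of cross products should isolate the $J_i$-component and force $a_i$ to satisfy the defining relations of $C(J_i)$. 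The sum is direct because $J_1 \cap J_2 = \{0\}$.

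For part (2), assume $C(J) = \{0\}$, so by part (1) both $C(J_1) = C(J_2) = \{0\}$. I would first show $Der(J_1) \oplus Der(J_2) \subseteq Der(J)$: a pair $(D_1, D_2)$ extends to a linear map $D$ on $J$ by acting as $D_i$ on $J_i$ and annihilating the other summand, and one checks $D$ is a derivation of $J$ using $J_1\circ J_2 = \{0\}$; the sum of these two subalgebras of $Der(J)$ is direct since they have disjoint supports. The substantive direction is $Der(J) \subseteq Der(J_1) \oplus Der(J_2)$. Here I would take $D \in Der(J)$ and show that $D$ preserves each ideal, i.e. $D(J_i) \subseteq J_i$; once this is known, the restrictions $D|_{J_i}$ are derivations of $J_i$ and $D$ decomposes as their sum.

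The main obstacle is precisely showing $D(J_i) \subseteq J_i$, since a priori $D(J_1)$ could have a nonzero $J_2$-component. The natural strategy is to write $D(x_1) = y_1 + y_2$ for $x_1 \in J_1$ and prove $y_2 = 0$ by exploiting that $C(J) = \{0\}$. Concretely, for $x_1, x_1' \in J_1$ and $z_2 \in J_2$, applying $D$ to the identity $x_1 \circ z_2 = 0$ and to products like $(x_1 \circ x_1')\circ z_2 = 0$ should produce relations among the cross-components of $D(x_1)$; the absence of center is what lets me conclude these cross-components vanish rather than merely lie in some central ideal. This is where the hypothesis $C(J) = \{0\}$ is genuinely used, mirroring the centerless assumption in the Lie algebra analogue of Meng, and I expect this to be the delicate computational heart of the argument. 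Once invariance of both ideals is established, the decomposition $D = D|_{J_1} \oplus D|_{J_2}$ and the directness of the sum follow immediately.
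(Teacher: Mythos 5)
Your proposal follows essentially the same route as the paper: double inclusion for part (1), and for part (2) extension-by-zero to get $Der(J_{1}) \dotplus Der(J_{2}) \subseteq Der(J)$, plus the key invariance step $D(J_{i}) \subseteq J_{i}$, which the paper proves exactly as you sketch --- applying $D$ to $x_{1} \circ z_{2} = 0$, using the ideal property to separate components, and invoking $C(J_{i}) = \{0\}$ (from part (1) and $C(J) = \{0\}$) to kill the cross-component. The only detail you leave implicit is the paper's final step that each $Der(J_{i})$ is an ideal, not merely a subalgebra, of the Lie algebra $Der(J)$ (which is what the symbol $\oplus$ asserts there); this is a one-line check, since $[D, D_{1}](x_{2}) = DD_{1}(x_{2}) - D_{1}D(x_{2}) = 0$ whenever $D_{1}$ annihilates $J_{2}$ and $D(J_{2}) \subseteq J_{2}$.
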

\begin{proof}
(1). Obviously, $C(J_{1}) \cap C(J_{2}) = \{0\}$. For all $z_{i} \in C(J_{i})(i = 1, 2)$, take $x = x_{1} + x_{2}$, $y = y_{1} + y_{2} \in J$, where $x_{1}, y_{1} \in J_{1}$, $x_{2}, y_{2} \in J_{2}$. We have
\[((z_{1} + z_{2}) \circ x) \circ y = ((z_{1} + z_{2}) \circ (x_{1} + x_{2})) \circ (y_{1} + y_{2}) = (z_{1} \circ x_{1}) \circ y_{1} + (z_{2} \circ x_{2}) \circ y_{2},\]
\[((z_{1} + z_{2}) \circ y) \circ x = ((z_{1} + z_{2}) \circ (y_{1} + y_{2})) \circ (x_{1} + x_{2}) = (z_{1} \circ y_{1}) \circ x_{1} + (z_{2} \circ y_{2}) \circ x_{2},\]
\[(z_{1} + z_{2}) \circ (x \circ y) = (z_{1} + z_{2}) \circ ((x_{1} + x_{2}) \circ (y_{1} + y_{2})) = z_{1} \circ (x_{1} \circ y_{1}) + z_{2} \circ (x_{2} \circ y_{2}),\]
since $z_{i} \in C(J_{i})(i = 1, 2)$, we have
\[(z_{i} \circ x_{i}) \circ y_{i} = (z_{i} \circ y_{i}) \circ x_{i} = z_{i} \circ (x_{i} \circ y_{i})\;(i = 1, 2).\]
Hence,
\[((z_{1} + z_{2}) \circ x) \circ y = ((z_{1} + z_{2}) \circ y) \circ x = (z_{1} + z_{2}) \circ (x \circ y),\]
which implies that $z_{1} + z_{2} \in C(J)$, i.e., $C(J_{1}) \dotplus C(J_{2}) \subseteq C(J)$.

On the other hand, for all $a \in C(J)$, suppose that $a = a_{1} + a_{2}$ where $a_{i} \in J_{i}(i = 1, 2)$. Then for all $x_{1}, y_{1} \in J_{1}$,
\[(a_{1} \circ x_{1}) \circ y_{1} = ((a - a_{2}) \circ x_{1}) \circ y_{1} = (a \circ x_{1}) \circ y_{1},\]
\[(a_{1} \circ y_{1}) \circ x_{1} = ((a - a_{2}) \circ y_{1}) \circ x_{1} = (a \circ y_{1}) \circ x_{1},\]
\[a_{1} \circ (x_{1} \circ y_{1}) = (a - a_{2}) \circ (x_{1} \circ y_{1}) = a \circ (x_{1} \circ y_{1}),\]
since $a \in C(J)$, we have
\[(a \circ x_{1}) \circ y_{1} = (a \circ y_{1}) \circ x_{1} = a \circ (x_{1} \circ y_{1}).\]
Hence,
\[(a_{1} \circ x_{1}) \circ y_{1} = (a_{1} \circ y_{1}) \circ x_{1} = a_{1} \circ (x_{1} \circ y_{1}),\]
which implies that $a_{1} \in C(J_{1})$. Similarly, we have $a_{2} \in C(J_{2})$.

Therefore, we have $C(J) = C(J_{1}) \dotplus C(J_{2})$.

(2). $\bf{Step 1}$. We'll show that $\forall i = 1, 2$, $D(J_{i}) \subseteq J_{i},\quad\forall D \in Der(J)$.

Suppose that $x_{i} \in J_{i}$, then
\[D(x_{1}) \circ x_{2} = D(x_{1} \circ x_{2}) - x_{1} \circ D(x_{2}) \in J_{1} \cap J_{2} = 0,\]
since $J_{1}$, $J_{2}$ are ideals of $J$.
Suppose that $D(x_{1}) = u_{1} + u_{2}$ where $u_{1} \in J_{1}$, $u_{2} \in J_{2}$. Then
\[u_{2} \circ x_{2} = (u_{1} + u_{2}) \circ x_{2} = D(x_{1}) \circ x_{2} = 0,\]
which implies that $u_{2} \in C(J_{2})$. Note that $C(J) = \{0\}$, we have $C(J_{i}) = \{0\}(i = 1, 2)$. Hence, $u_{2} = 0$. That is to say $D(x_{1}) \in J_{1}$. Similarly, we have $D(x_{2}) \in J_{2}$.

$\bf{Step 2}$. We'll show that $Der(J_{1}) \dotplus Der(J_{2}) \subseteq Der(J)$.

For any $D \in Der(J_{1})$, we extend it to a linear map on $J$ as follow
\[D(x_{1} + x_{2}) = D(x_{1}),\quad\forall x_{1} \in J_{1}, x_{2} \in J_{2}.\]

Then for any $x, y \in J$, suppose that $x = x_{1} + x_{2}$, $y = y_{1} + y_{2} \in J$, where $x_{1}, y_{1} \in J_{1}$, $x_{2}, y_{2} \in J_{2}$, we have
\[D(x \circ y) = D((x_{1} + x_{2}) \circ (y_{1} + y_{2})) = D(x_{1} \circ y_{1} + x_{2} \circ y_{2}) = D(x_{1} \circ y_{1}),\]
\[D(x) \circ y + x \circ D(y) = D(x_{1} + x_{2}) \circ (y_{1} + y_{2}) + (x_{1} + x_{2}) \circ D(y_{1} + y_{2}) = D(x_{1}) \circ y_{1} + x_{1} \circ D(y_{1}),\]
since $D \in Der(J_{1})$,
\[D(x_{1} \circ y_{1}) = D(x_{1}) \circ y_{1} + x_{1} \circ D(y_{1}),\]
we have
\[D(x \circ y) = D(x) \circ y + x \circ D(y),\]
which implies that $D \in Der(J)$, i.e., $Der(J_{1}) \subseteq Der(J)$. Moreover, $D \in Der(J_{1})$ if and only if $D(x_{2}) = 0,\; \forall x_{2} \in J_{2}$.

Similarly, we have $Der(J_{2}) \subseteq Der(J)$ and $D \in Der(J_{2})$ if and only if $D(x_{1}) = 0,\; \forall x_{1} \in J_{1}$.

Then we have $Der(J_{1}) + Der(J_{2}) \subseteq Der(J)$ and $Der(J_{1}) \cap Der(J_{2}) = \{0\}$. Hence, $Der(J_{1}) \dotplus Der(J_{2}) \subseteq Der(J)$.

$\bf{Step 3}$. We'll prove that $Der(J_{1}) \dotplus Der(J_{2}) = Der(J)$.

Suppose that $D \in Der(J)$. Set $x = x_{1} + x_{2}, x_{i} \in J_{i}$. Define $D_{1}, D_{2}$ as follows
\begin{equation}
\left\{
\begin{aligned}
D_{1}(x_{1} + x_{2}) = D(x_{1}),\\
D_{2}(x_{1} + x_{2}) = D(x_{2}).
\end{aligned}
\right.
\end{equation}
Obviously, $D = D_{1} + D_{2}$.

For any $u_{1}, v_{1} \in J_{1}$,
\[D_{1}(u_{1} \circ v_{1}) = D(u_{1} \circ v_{1}) = D(u_{1}) \circ v_{1} + u_{1} \circ D(v_{1}) = D_{1}(u_{1}) \circ v_{1} + u_{1} \circ D_{1}(v_{1}).\]
Hence, $D_{1} \in Der(J_{1})$. Similarly, $D_{2} \in Der(J_{2})$.

Therefore, $Der(J) = Der(J_{1}) \dotplus Der(J_{2})$.

$\bf{Step 4}$. We'll show that $Der(J_{i}) \lhd Der(J)$. Suppose that $D_{1} \in Der(J_{1}), D \in Der(J), x_{2} \in J_{2}$,
\[[D, D_{1}](x_{2}) = DD_{1}(x_{2}) - D_{1}D(x_{2}) = 0,\]
which implies that $[D, D_{1}] \in Der(J_{1})$, i.e., $Der(J_{1}) \lhd Der(J)$. Similarly, $Der(J_{2}) \lhd Der(J)$.

Therefore, we have $Der(J) = Der(J_{1}) \oplus Der(J_{2})$.
\end{proof}

According to Lemma \ref{le:2.1} and Theorem \ref{thm:2.2}, we only need to study the derivation algebras of simple Jordan algebras in order to study the derivation algebras of semi-simple Jordan algebras. Moreover, in \cite{FN1}, we get that the simple Jordan algebras over a field whose characteristic is $0$ fall into three ``great" classes and one exceptional class as following:
\begin{enumerate} [(A)]
\item The special Jordan algebras generated by simple associative algebras $\mathscr{A}$ with the multiplication $a \circ b = \frac{1}{2}(ab + ba)$, denoted by $\mathscr{A}^{+}$;
\item The Jordan algebras $H(\mathscr{A}, P)$ of $P$-symmetric elements in simple involutorial algebras $\mathscr{A}$ with an involution $P$;
\item The Jordan algebras constructed by the algebras that define Clifford systems. These have a basis $\{1, u_{1}, \cdots, u_{n}\}$ such that $1$ is the unit and $u^{2}_{i} = \alpha_{i}1$ where $\alpha_{i} \neq 0, u_{i} \circ u_{j} = 0$ if $i \neq j$;
\item The exceptional Jordan algebra corresponding to the system $M^{8}_{3}$ which has $27$ dimensions over its center.
\end{enumerate}

In the following part, we'll study the derivation algebras of the above four kinds of simple Jordan algebras respectively and give the sufficient and necessary conditions that the derivation algebras of them are simple.

\begin{lem}\cite{FN1}\label{le:2.3}
Let $\mathscr{A}$ be a simple associative algebra. Then if $D$ is a derivation on the Jordan algebra $\mathscr{A}^{+}(i.e., Jordan\;algebras\;of\;type\;A)$ there exists an element $d$ in $\mathscr{A}$ such that $D(a) = [a, d]$ for all $a$.
\end{lem}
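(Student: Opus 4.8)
The plan is to linearize the Jordan derivation condition by passing to the dual numbers, converting $D$ into a Jordan homomorphism to which the classical structure theory of such maps applies. First I would record the easy direction as orientation: for any fixed $d \in \mathscr{A}$ the map $a \mapsto [a,d] = ad - da$ is indeed a derivation of $\mathscr{A}^{+}$, since a direct expansion of $a \circ b = \frac{1}{2}(ab+ba)$ gives the identity $[a \circ b, d] = (ad-da)\circ b + a \circ (bd - db)$. The content of the lemma is the converse, so I fix an arbitrary derivation $D$ of $\mathscr{A}^{+}$, working over a field $\mathrm{F}$ of characteristic $0$ as in force throughout.

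Next I would form the dual-number algebra $\mathscr{A}[\epsilon] = \mathscr{A} \otimes_{\mathrm{F}} \mathrm{F}[\epsilon]/(\epsilon^{2})$ and consider the linear map $\phi : \mathscr{A} \to \mathscr{A}[\epsilon]$ given by $\phi(a) = a + \epsilon D(a)$. Expanding $\phi(a) \circ \phi(b)$ inside $(\mathscr{A}[\epsilon])^{+}$ and using $\epsilon^{2} = 0$ yields $\phi(a) \circ \phi(b) = a \circ b + \epsilon\bigl(D(a) \circ b + a \circ D(b)\bigr)$, so the derivation identity for $D$ is exactly equivalent to $\phi$ being a homomorphism of Jordan algebras $\mathscr{A}^{+} \to (\mathscr{A}[\epsilon])^{+}$. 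This is the key reformulation: it trades the first-order (differential) condition for a multiplicative one.

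I would then invoke the structure theorem for Jordan homomorphisms out of a simple associative algebra, namely that any such homomorphism is either an associative homomorphism or an associative anti-homomorphism. To select the correct alternative for $\phi$, reduce modulo $\epsilon$: the composite $\mathscr{A} \xrightarrow{\phi} \mathscr{A}[\epsilon] \to \mathscr{A}$ is the identity, which is a genuine homomorphism rather than an anti-homomorphism, so $\phi$ cannot fall into the anti-homomorphism case when $\mathscr{A}$ is noncommutative. (The commutative case forces $\mathscr{A}$ to be a field, where $\mathscr{A}^{+} = \mathscr{A}$ and every $[a,d]$ vanishes while field derivations need not; this degenerate case must be excluded or treated apart.) Hence $\phi$ is an associative homomorphism, $\phi(ab) = \phi(a)\phi(b)$, and comparing the coefficients of $\epsilon$ gives $D(ab) = D(a)b + aD(b)$, so $D$ is in fact an associative derivation of $\mathscr{A}$.

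Finally, since $\mathscr{A}$ is a finite-dimensional central simple associative algebra, every associative derivation of $\mathscr{A}$ is inner, which is classical; this produces $d \in \mathscr{A}$ with $D(a) = [a,d]$ for all $a$, as asserted. The main obstacle is the structure theorem in the third step together with the careful disposal of the anti-homomorphism alternative: ruling it out cleanly is precisely where the simplicity and effective noncommutativity of $\mathscr{A}$ are used, and the purely commutative case is exactly what shows the conclusion can fail without such a hypothesis. A fallback closer to Albert's idempotent method would avoid dual numbers entirely, fixing a nontrivial idempotent, passing to the Peirce decomposition of $\mathscr{A}^{+}$, and showing $D$ is inner on each Peirce component; but this route is considerably more computational.
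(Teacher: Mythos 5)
Your dual-number reformulation is correct as far as it goes: $\phi(a) = a + \epsilon D(a)$ is a Jordan homomorphism $\mathscr{A}^{+} \to (\mathscr{A}[\epsilon])^{+}$ exactly because $D$ is a derivation of $\mathscr{A}^{+}$. The genuine gap is the ``structure theorem'' you invoke next: no theorem says that a Jordan homomorphism \emph{out of} a simple associative algebra \emph{into an arbitrary} associative algebra is either a homomorphism or an anti-homomorphism. Herstein's dichotomy theorem requires the Jordan map to be \emph{onto a prime ring} of characteristic $\neq 2$, and both hypotheses fail here: $\phi$ is far from surjective, and $\mathscr{A}[\epsilon]$ is not prime, since $\epsilon\mathscr{A}$ is a nonzero ideal with $(\epsilon\mathscr{A})^{2} = 0$. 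The theorem that does allow an arbitrary target (Jacobson--Rickart) requires the source to contain a full set of $n \times n$ matrix units with $n \geq 2$ --- so it misses division algebras, which are simple --- and concludes only that $\phi$ is a \emph{sum} of a homomorphism and an anti-homomorphism, not an either/or. The dichotomy is genuinely false in your setting: for noncommutative simple $\mathscr{A}$, the map $a \mapsto (a,a)$ into $\mathscr{A} \times \mathscr{A}^{\mathrm{op}}$ is a Jordan homomorphism that is neither a homomorphism nor an anti-homomorphism. Consequently your step ``reduce mod $\epsilon$ to exclude the anti-homomorphism alternative'' has nothing to act on, and the argument does not go through as written.

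The overall strategy (Jordan derivation $\Rightarrow$ associative derivation $\Rightarrow$ inner) is sound, and the repair is to bypass the dual numbers entirely: since $D(a^{2}) = D(a)a + aD(a)$, your $D$ is a Jordan derivation in the ring-theoretic sense, and Herstein's 1957 theorem (every Jordan derivation of a prime ring of characteristic $\neq 2$ is an associative derivation; simple rings are prime) gives directly that $D$ is an associative derivation of $\mathscr{A}$. Two further touches are then needed in your last step. First, finite-dimensionality is not stated in the lemma but is implicit in the paper's context (the type A--D classification concerns algebras with finite bases); without it the lemma is false, e.g.\ $\mathscr{A} = \mathrm{F}(t)$ with $D = d/dt$. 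Second, $\mathscr{A}$ need not be \emph{central} simple over $\mathrm{F}$: you should first observe that $D$ maps the center $Z$ into itself and vanishes there (as $Z/\mathrm{F}$ is separable in characteristic $0$), so $D$ is a $Z$-linear derivation of the central simple $Z$-algebra $\mathscr{A}$ and hence inner; this also disposes of your commutative caveat, since in finite dimension and characteristic $0$ all derivations of a field extension vanish and $d = 0$ works. Note finally that the paper itself gives no proof of this lemma --- it is quoted from \cite{FN1}, where it is established inside the structure theory of that paper --- so any proof you supply must stand on its own, and yours, as written, does not yet.
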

\begin{lem}\cite{FN1}\label{le:2.4}
Let $\mathscr{A}$ be a simple associative algebra that has an involution $P$ (an anti-isomorphism of $\mathscr{A}$ of period $2$). Then if $D$ is a derivation on $H(\mathscr{A}, P)(i.e., Jordan\;algebras\\\;of\;type\;B)$ there exists a $P$-skew element $d$ in $\mathscr{A}$ such that $D(a) = [a, d]$.
\end{lem}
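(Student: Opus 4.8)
The plan is to reduce the statement to the already-available description of derivations of type $A$ algebras, Lemma \ref{le:2.3}, by first promoting the Jordan derivation $D$ of $H(\mathscr{A},P)$ to an \emph{associative} derivation $\widetilde{D}$ of the whole of $\mathscr{A}$, and then reading off the skew-symmetry of the element $d$ from the compatibility of $\widetilde{D}$ with the involution $P$. Write $H = H(\mathscr{A},P) = \{a \in \mathscr{A} : P(a) = a\}$ for the $P$-symmetric elements, so that $H$ is a Jordan subalgebra of $\mathscr{A}^{+}$ and $D(H) \subseteq H$. The first input I would use is the classical fact that, for a simple associative algebra with involution over a field of characteristic not $2$, the symmetric elements $H$ generate $\mathscr{A}$ as an associative algebra; equivalently, every element of $\mathscr{A}$ is a sum of products $h_{1}h_{2}\cdots h_{k}$ with $h_{i} \in H$.

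Granting this, I would define the extension by the Leibniz rule on such products, $\widetilde{D}(h_{1}\cdots h_{k}) = \sum_{i} h_{1}\cdots D(h_{i}) \cdots h_{k}$, and check that the result is a well-defined derivation of $\mathscr{A}$. This well-definedness is exactly the step I expect to be the main obstacle: since $\mathscr{A}$ need not be the universal associative envelope of $H$, there may be relations among products of symmetric elements, and one must verify that $\widetilde{D}$ respects them. The natural way to control this is through the Jordan triple and $U$-operator identities: for $a,b \in H$ one has $aba = P(aba) \in H$, and the Jordan derivation law gives $D(aba) = D(a)ba + aD(b)a + abD(a)$, which is precisely the associative Leibniz rule for $aba$; iterating these identities on higher products is what forces consistency. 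Once $\widetilde{D}$ is in hand, its compatibility with $P$ is automatic: because each $D(h_{i}) \in H$ and $P$ reverses products, a direct computation on $h_{1}\cdots h_{k}$ gives $P\widetilde{D} = \widetilde{D}P$ on generators, hence on all of $\mathscr{A}$ by linearity.

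With $\widetilde{D}$ an associative derivation of $\mathscr{A}$, it is in particular a derivation of the Jordan algebra $\mathscr{A}^{+}$, so Lemma \ref{le:2.3} supplies an element $d \in \mathscr{A}$ with $\widetilde{D}(a) = [a,d]$ for all $a \in \mathscr{A}$, and in particular $D(a) = [a,d]$ for $a \in H$. It remains to arrange that $d$ is $P$-skew. I would feed the relation $P\widetilde{D} = \widetilde{D}P$ into the formula $\widetilde{D}(a) = [a,d]$: applying $P$ to $[a,d]$ and comparing with $\widetilde{D}(P(a))$ yields, after letting $P(a)$ range over $\mathscr{A}$, the identity $[\,P(d)+d,\,c\,] = 0$ for every $c \in \mathscr{A}$. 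Hence $z := P(d) + d$ lies in the center of $\mathscr{A}$, and applying $P$ shows $z$ is symmetric. Since $\mathscr{A}$ is simple its center is a field of characteristic not $2$, so $\tfrac{1}{2}z$ makes sense and is central; replacing $d$ by $d_{0} = d - \tfrac{1}{2}z$ leaves $[a,d_{0}] = [a,d]$ unchanged while giving $P(d_{0}) = -d_{0}$. Thus $D(a) = [a,d_{0}]$ with $d_{0}$ a $P$-skew element, as required.

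As a cross-check on the final output, I note an independent route that makes the skew-symmetry transparent: using that $H$ is a simple (hence semi-simple) Jordan algebra, so that $Der(H)=Inn(H)$ in characteristic $0$, together with the operator identity $[L_{a},L_{b}] = \tfrac{1}{4}\,\mathrm{ad}_{[a,b]}$ valid for $a,b\in H\subseteq\mathscr{A}^{+}$, where $[a,b]=ab-ba$ is automatically $P$-skew because $P([a,b]) = P(b)P(a)-P(a)P(b) = -[a,b]$. This exhibits every inner derivation of $H$ directly as $\mathrm{ad}_{d}$ for a skew $d$, matching the conclusion; I would keep the extension-based argument above as the primary proof since it is self-contained relative to Lemma \ref{le:2.3}, and use this second computation only to confirm the form of the answer.
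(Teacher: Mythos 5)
First, note that the paper offers no proof of this lemma at all: it is quoted from \cite{FN1}, so your proposal has to stand on its own, and its primary route has a genuine gap at precisely the step you yourself flag as the main obstacle and then wave through. The Leibniz rule does define a derivation of the tensor algebra on the vector space $H$, but for it to descend along the multiplication map onto $\mathscr{A}$ you must show that the kernel of that map is invariant under this derivation, and that kernel consists of \emph{all} associative relations among products of symmetric elements, not only the special relations $D(aba)=D(a)ba+aD(b)a+abD(a)$ that the Jordan derivation law controls. Saying that ``iterating these identities on higher products forces consistency'' is not an argument; making it one is essentially a Herstein--Martindale type extension theorem (or, in the finite-dimensional setting, an argument through the special universal envelope of $H$: the derivation lifts formally to the envelope, kills its central idempotents, and therefore descends to the simple quotient $\mathscr{A}$), none of which is supplied. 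A second, smaller defect: the generation fact you invoke has exceptions. For a quaternion algebra with its canonical (symplectic-type) involution, $H(\mathscr{A},P)$ is just the center $\rm{F}1$, which generates only $\rm{F}1$; the lemma is trivially true there (the only derivation is $0$), but your argument does not cover this case, so it would at least have to be split off.

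Ironically, the paragraph you demote to a ``cross-check'' is the standard complete proof, and essentially the one underlying the citation. In the finite-dimensional characteristic-$0$ setting in which the paper uses this lemma, $H=H(\mathscr{A},P)$ is a simple Jordan algebra, so Lemma \ref{le:3.6} (Jacobson's innerness theorem \cite{N1}) gives $Der(H)=Inn(H)$, where inner derivations are by definition finite sums $\sum_{i}[L_{a_{i}},L_{b_{i}}]$ with $a_{i},b_{i}\in H$. The computation $[L_{a},L_{b}]x=\tfrac{1}{4}\bigl[[a,b],x\bigr]$ valid in any special Jordan algebra, together with $P([a,b])=-[a,b]$ for $P$-symmetric $a,b$, exhibits any such sum as $x\mapsto[x,d]$ with $d=-\tfrac{1}{4}\sum_{i}[a_{i},b_{i}]$ a $P$-skew element of $\mathscr{A}$. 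This argument is complete, handles the quaternion case without fuss, avoids the extension problem entirely, and is not circular, since the innerness theorem of \cite{N1} is proved by Lie-theoretic means independent of the type-by-type formulas. You should promote this to the proof, and either discard the extension route or supply the missing extension theorem; as it stands, the roles of proof and cross-check are reversed.
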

\begin{lem}
Let $\mathscr{A}$ be a simple associative algebra that has an involution $P$ (an anti-isomorphism of $\mathscr{A}$ of period $2$) and $H = \{a \mid a \in \mathscr{A}, P(a) = -a\}$. Then $H$ is closed under the usual Lie bracket.
\end{lem}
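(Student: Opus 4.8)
The plan is to verify the closure directly by computing the action of $P$ on a commutator of two $P$-skew elements and checking that the outcome is again $P$-skew. The only properties I will need are that $P$ is linear (being an involution, it is in particular additive and scalar-preserving) and that it reverses products, i.e.\ $P(xy) = P(y)P(x)$ for all $x, y \in \mathscr{A}$, which is exactly the content of $P$ being an anti-isomorphism. It is worth noting that neither the period-$2$ condition nor the simplicity of $\mathscr{A}$ is actually used in this particular statement; they are hypotheses inherited from the ambient setup of Lemmas \ref{le:2.3} and \ref{le:2.4}, and the closure holds for any algebra equipped with a linear anti-homomorphism.

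First I would fix $a, b \in H$, so that $P(a) = -a$ and $P(b) = -b$, and apply $P$ to the commutator $[a,b] = ab - ba$, splitting it with linearity:
\[P([a,b]) = P(ab - ba) = P(ab) - P(ba).\]
Next, using the order-reversing property on each product, I would write $P(ab) = P(b)P(a)$ and $P(ba) = P(a)P(b)$. Substituting $P(a) = -a$ and $P(b) = -b$ then yields $P(ab) = (-b)(-a) = ba$ and $P(ba) = (-a)(-b) = ab$, so that
\[P([a,b]) = ba - ab = -(ab - ba) = -[a,b].\]

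This shows $P([a,b]) = -[a,b]$, i.e.\ $[a,b] \in H$, which is precisely the claimed closure. There is no genuine obstacle here: the whole argument rests on the single observation that the order-reversal built into an anti-isomorphism turns the two individual sign flips into a net sign flip of the commutator. The only point requiring a moment's care is to keep track of the reversed order of the factors when applying $P$, since it is exactly this reversal---rather than an ordinary homomorphism---that produces the correct sign and makes $H$ a Lie subalgebra under the bracket.
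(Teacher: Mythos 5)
Your proof is correct and follows exactly the same computation as the paper: apply $P$ to the commutator, use additivity and the anti-homomorphism property to reverse each product, and substitute the skewness of the two elements to obtain the net sign flip. Your side remark that simplicity and the period-$2$ condition are not actually needed is accurate, but the argument itself is identical to the paper's.
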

\begin{proof}
For any $x, y \in H$, we have
\begin{align*}
&P([x, y]) = P(xy - yx) = P(xy) - P(yx) = P(y)P(x) - P(x)P(y)\\
&= (-y)(-x) - (-x)(-y) = yx - xy = -[x, y],
\end{align*}
which implies that $[x, y]$ is $P$-skew. Hence, $H$ is closed under the usual Lie bracket.
\end{proof}

We denote such $D$ in Lemma \ref{le:2.3} and Lemma \ref{le:2.4} by $D_{d}$.
\begin{thm}\label{thm:2.6}
Suppose that $J$ is a simple Jordan algebra of type A(respectively, of type B). Let $\mathscr{A}$ be the associated simple associative algebra and $L$ the Lie algebra generated by $\mathscr{A}$(respectively, all $P$-skew elements in $\mathscr{A}$). Then the following are equivalent
\begin{enumerate}[(1)]
\item $Der(J)$ is simple;
\item $L/Z(L)$ is simple where $Z(L)$ denotes the center of $L$.
\end{enumerate}
\end{thm}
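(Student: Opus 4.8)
The plan is to realise $Der(J)$ as a quotient of $L$ and to identify the kernel with the centre $Z(L)$, so that $Der(J)\cong L/Z(L)$ as Lie algebras; once this isomorphism is in hand the equivalence of (1) and (2) is immediate, because isomorphic Lie algebras are simultaneously simple or non-simple. Concretely, I would consider the linear map $\phi:L\to Der(J)$, $d\mapsto D_{d}$, where $D_{d}(a)=[a,d]$ acts on $J$ (on $\mathscr{A}^{+}$ in type A, on $H(\mathscr{A},P)$ in type B). By Lemma \ref{le:2.3} and Lemma \ref{le:2.4} every derivation of $J$ has this form, so $\phi$ is surjective. A direct computation with the Jacobi identity in $\mathscr{A}$, namely $[a,[e,d]]=[[a,e],d]+[e,[a,d]]$, gives $[D_{d},D_{e}]=D_{[e,d]}$ for all $d,e\in L$, so $\phi$ is an anti-homomorphism of Lie algebras; replacing $\phi$ by $d\mapsto -D_{d}$ (or simply noting that an anti-isomorphism preserves the lattice of ideals) turns this into a Lie algebra homomorphism with the same kernel and image. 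Hence $Der(J)\cong L/\ker\phi$.

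The next step is to compute $\ker\phi$ and show it equals $Z(L)$. In type A one has $L=\mathscr{A}$ with the commutator bracket, and $d\in\ker\phi$ iff $[a,d]=0$ for every $a\in\mathscr{A}$, which is by definition the condition $d\in Z(L)$; thus $\ker\phi=Z(L)$ on the nose and $Der(J)\cong L/Z(L)$, finishing type A. In type B one has $L=\{d\in\mathscr{A}:P(d)=-d\}$ (closed under the bracket by the preceding lemma), and $\ker\phi=\{d\in L:[s,d]=0\ \text{for all}\ s\in H(\mathscr{A},P)\}$, i.e. the skew elements centralising all symmetric elements, whereas $Z(L)$ is the set of skew elements centralising all skew elements. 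The aim is to show both coincide, and in fact both equal $Z(\mathscr{A})\cap L$, where $Z(\mathscr{A})$ is the associative centre of $\mathscr{A}$.

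The containments $Z(\mathscr{A})\cap L\subseteq\ker\phi$ and $Z(\mathscr{A})\cap L\subseteq Z(L)$ are clear, since a central element of $\mathscr{A}$ commutes with everything. For the reverse inclusions I would invoke the classical structure of simple associative algebras with involution in characteristic $0$: the symmetric elements $H(\mathscr{A},P)$ generate $\mathscr{A}$ as an associative algebra, and so do the skew elements, the only exception being the low-dimensional symplectic case in which $H(\mathscr{A},P)$ collapses to the centre, a case that does not yield a genuine simple Jordan algebra of type B and is therefore excluded. Granting this, the centraliser of $H(\mathscr{A},P)$ in $\mathscr{A}$ equals the centraliser of all of $\mathscr{A}$, namely $Z(\mathscr{A})$, so $\ker\phi=Z(\mathscr{A})\cap L$; the same argument applied to the generating set of skew elements gives $Z(L)=Z(\mathscr{A})\cap L$. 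Hence $\ker\phi=Z(L)$, and $Der(J)\cong L/Z(L)$, which yields the equivalence.

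I expect the main obstacle to be precisely this type B identity $\ker\phi=Z(L)$: unlike type A, where the kernel is visibly the Lie centre, in type B one must pass between ``centralising the symmetric part'' and ``centralising the skew part'', and the honest way to bridge the two is through the generation theorem for simple involutorial algebras, together with a careful treatment of the excluded small-dimensional exception. By contrast, the anti-homomorphism property of $\phi$ and its surjectivity are routine once Lemmas \ref{le:2.3} and \ref{le:2.4} are available, and the final reduction of the stated equivalence to the isomorphism $Der(J)\cong L/Z(L)$ is formal.
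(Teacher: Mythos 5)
Your proof is correct, and at its core it runs on the same engine as the paper's: the assignment $d \mapsto D_{d}$, its surjectivity onto $Der(J)$ (Lemmas \ref{le:2.3} and \ref{le:2.4}), and the bracket identity $[D_{a}, D_{b}] = -D_{[a,b]}$. The difference is in packaging and in one substantive point. The paper never states the isomorphism $Der(J) \cong L/Z(L)$; instead it runs two separate contradiction arguments, transporting a nontrivial ideal of $Der(J)$ to a nontrivial ideal of $L/Z(L)$ and back, using the injectivity statement $D_{d_{1}} = D_{d_{2}}$ if and only if $\bar{d_{1}} = \bar{d_{2}}$. Your route through the first isomorphism theorem subsumes both directions at once and is the cleaner formulation. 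More importantly, you isolate a genuine subtlety that the paper passes over in silence: in type B the kernel of $\phi$ consists of the skew elements centralizing $H(\mathscr{A}, P)$, whereas $Z(L)$ consists of the skew elements centralizing the skew elements, and identifying the two requires the Herstein-type generation theorem (the symmetric part, and likewise the skew part, generates $\mathscr{A}$ as an associative algebra, outside the low-dimensional symplectic exception). The paper relies on this identification implicitly in both directions of its argument --- for instance when it asserts that $[a, d_{1}] = [a, d_{2}]$ for all $a \in J$ forces $d_{1} - d_{2} \in Z(L)$, and when it asserts that $D_{d_{0}} \neq 0$ forces $d_{0} \notin Z(L)$ --- so your version is in fact the more complete one, provided you cite or prove the generation result and dispose of the excluded exceptional case explicitly.
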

\begin{proof}
(1). Suppose that $L/Z(L)$ is simple. We show that $Der(J)$ is simple by using reduction to absurdity.

Otherwise, there exists a non trivial ideal of $Der(J)$, denoted by $D_{S}$. According to Lemma \ref{le:2.3}(respectively, Lemma \ref{le:2.4}), there exists an element(respectively, a $P$-skew element) $d$ in $\mathscr{A}$ such that $D = D_{d}$ for any $D \in Der(J)$. Let $S = \{\bar{d} \mid D_{d} \in D_{S}\}$. It's obvious that $S$ is a subspace of $L/Z(L)$.

Since $D_{S} \neq \{0\}$, there exists a nonzero element $D_{d_{0}} \in D_{S}$. Then $\exists 0 \neq a \in J$ such that
\[[a, d_{0}] = D_{d_{0}}(a) \neq 0,\]
which implies that $d_{0} \notin Z(L)$. Hence, $\bar{d_{0}} \neq \bar{0}$, $S \neq \{\bar{0}\}$.

Since $D_{S} \neq Der(J)$, there exists a nonzero element $D_{d_{1}} \in Der(J)$ and $D_{d_{1}} \notin D_{S}$. That is to say $\bar{0} \neq \bar{d_{1}} \in L/Z(L)$ and $\bar{d_{1}} \notin S$, i.e., $S \neq L/Z(L)$.

For any $a, b \in L, c \in J$, we have
\begin{align*}
&[D_{a}, D_{b}](c) = D_{a}D_{b}(c) - D_{b}D_{a}(c) = [[c, b], a] - [[c, a], b] = [[c, b], a] + [[a, c], b]\\
&= -[[b, a], c] = [c, [b, a]] = -[c, [a, b]] = -D_{[a, b]}(c).
\end{align*}
Hence we have $[D_{a}, D_{b}] = -D_{[a, b]}$.

For all $\bar{a} \in L/Z(L)$ and any $\bar{d} \in S$, we have
\[D_{[a, d]} = -[D_{a}, D_{d}] \in D_{S},\]
which implies that $\overline{[a, d]} \in S$, i.e., $[\bar{a}, \bar{d}] \in S$. Hence, $S$ is a non trivial ideal of $L/Z(L)$, contradicting with $L/Z(L)$ is simple.

Hence, $Der(J)$ is simple.

(2). Suppose that $Der(J)$ is simple. We prove that $L/Z(L)$ is simple by using reduction to absurdity.

Otherwise, there exists a non trivial ideal of $L/Z(L)$, denoted by $S$. Let $D_{S} = \{D_{d} \mid \bar{d} \in S\}$. It's obvious that $D_{S}$ is a subspace of $Der(J)$.

Since $S \neq \{\bar{0}\}$, there exists a nonzero element $\bar{d_{0}} \in S$, i.e., $d_{0} \notin Z(L)$. Then there exists $0 \neq a \in L$ such that $[a, d_{0}] \neq 0$, i.e., $D_{d_{0}}(a) \neq 0$. Hence, $D_{d_{0}} \neq 0$. Therefore, $D_{S} \neq \{0\}$.

Assume that $D_{d_{1}} = D_{d_{2}}$ where $d_{1}, d_{2} \in L$, then for any $a \in J$
\[[a, d_{1}] = D_{d_{1}}(a) = D_{d_{2}}(a) = [a, d_{2}],\]
which implies that $d_{1} - d_{2} \in Z(L)$, i.e., $\bar{d_{1}} = \bar{d_{2}}$. Hence, $D_{d_{1}} = D_{d_{2}}$ if and only if $\bar{d_{1}} = \bar{d_{2}}$.

Since $S \neq L/Z(L)$, there exists a nonzero element $\bar{d_{1}} \in L/Z(L)$ and $\bar{d_{1}} \notin S$. That is to say $D_{d_{1}} \in Der(J)$ and $D_{d_{1}} \notin D_{S}$. Hence, $D_{S} \neq Der(J)$.

For all $D \in Der(J)$, according to Lemma \ref{le:2.3}(respectively, Lemma \ref{le:2.4}), there exists an element(respectively, a $P$-skew element) $d$ in $\mathscr{A}$ such that $D = D_{d}$. For all $\bar{d_{1}} \in S$, we have
\[[D, D_{d_{1}}] = [D_{d}, D_{d_{1}}] = -D_{[d, d_{1}]},\]
we have $\overline{[d, d_{1}]} = [\bar{d}, \bar{d_{1}}] \in S$ since $S$ is an ideal of $L/Z(L)$, hence $[D, D_{d_{1}}] \in D_{S}$.

Hence, we have $[Der(J), D_{S}] \subseteq D_{S}$, i.e., $D_{S}$ is a non trivial ideal of $Der(J)$, contradicting with $Der(J)$ is simple.

Therefore, $L/Z(L)$ is simple.
\end{proof}
\begin{lem}\label{le:2.7}
Let $J$ be a simple Jordan algebra of type $C$, i.e., $J$ has a basis $\{1, u_{1}, \cdots, u_{n}\}$ such that $1$ is the unit and
\[u^{2}_{i} = \alpha_{i}1\; where\; \alpha_{i} \neq 0, u_{i} \circ u_{j} = 0 \;if \;i \neq j.\]

Then $Der(J)$ is isomorphic to $M$ where $M$ is the Lie algebra generated by all $n \times n$-matrices $(v_{ij})_{n \times n}$ satisfying $\alpha_{i}v_{ji} + \alpha_{j}v_{ij} = 0$.
\end{lem}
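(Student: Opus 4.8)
The plan is to set up an explicit linear bijection between $Der(J)$ and the matrix space $M$ and then check that it intertwines the two Lie brackets, so the whole proof reduces to a coordinate computation plus a bracket bookkeeping step.

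First I would fix the basis $\{1, u_1, \dots, u_n\}$ and pin down the shape of an arbitrary $D \in Der(J)$. Applying $D$ to $1 = 1 \circ 1$ gives $D(1) = 2D(1)$, so $D(1) = 0$ (the characteristic is $0$). Writing $D(u_i) = \lambda_i 1 + \sum_j v_{ij} u_j$, I would feed the two defining families of relations into the derivation identity. Applying $D$ to $u_i^2 = \alpha_i 1$ yields $0 = 2\,u_i \circ D(u_i) = 2\lambda_i u_i + 2 v_{ii}\alpha_i 1$, which forces $\lambda_i = 0$ and, since $\alpha_i \neq 0$, also $v_{ii} = 0$; in particular $D$ kills $1$ and preserves the span of the $u_i$. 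Applying $D$ to $u_i \circ u_j = 0$ for $i \neq j$ gives $0 = v_{ij}\alpha_j 1 + v_{ji}\alpha_i 1$, i.e. the relation $\alpha_i v_{ji} + \alpha_j v_{ij} = 0$. Thus $D \mapsto (v_{ij})$ is a well-defined linear map $\Phi : Der(J) \to M$, and it is injective because a derivation is determined by $D(1) = 0$ and the values $D(u_i)$.

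Next I would prove surjectivity by reversing the computation. Given any $(v_{ij}) \in M$, define a linear map by $D(1) = 0$ and $D(u_i) = \sum_j v_{ij} u_j$; since $\circ$ is bilinear it suffices to verify the derivation identity on products of basis vectors. The products $1 \circ 1$ and $1 \circ u_i$ are immediate, the product $u_i^2 = \alpha_i 1$ uses the diagonal case $v_{ii} = 0$ of the relation, and the products $u_i \circ u_j = 0$ ($i \neq j$) reproduce exactly $\alpha_i v_{ji} + \alpha_j v_{ij} = 0$. Hence such a $D$ is a derivation, so $\Phi$ is a linear isomorphism of vector spaces.

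Finally I would check compatibility with the brackets, which is where I expect the only real subtlety. A direct computation of $D \circ D'$ on the basis shows that composition corresponds to a matrix product, but with the convention $D(u_i) = \sum_j v_{ij} u_j$ it is the product in the \emph{opposite} order, so $\Phi([D, D']) = -[\Phi(D), \Phi(D')]$. This is repaired just as in the proof of Theorem \ref{thm:2.6}: the adjusted map $D \mapsto -(v_{ij})$ is then a genuine Lie algebra homomorphism, and it is still a bijection onto $M$ because $M$ is a linear subspace stable under negation. It then remains to confirm that $M$ is genuinely a Lie algebra, i.e. closed under the commutator. Writing the relation as $VA + (VA)^{\mathsf T} = 0$ with $A = \mathrm{diag}(\alpha_1, \dots, \alpha_n)$ invertible, a short transpose computation shows $[V, W]A$ is skew whenever $VA$ and $WA$ are, so the admissible matrices already form a bracket-closed set. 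Combining these steps gives $Der(J) \cong M$ as Lie algebras. The routine parts are the bilinear-extension checks; the only places that need care are the sign in the bracket correspondence and the verification that $M$ is bracket-closed.
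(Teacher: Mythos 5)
Your proposal is correct, and its skeleton is the same as the paper's: pin down the shape of a derivation by evaluating the derivation identity on the basis $\{1, u_{1}, \cdots, u_{n}\}$, obtain the relations $\alpha_{i}v_{ji} + \alpha_{j}v_{ij} = 0$ together with $v_{ii} = 0$ and the vanishing of all $1$-components, and reverse the computation to get surjectivity. Where you genuinely differ is in the two verifications you flagged, and in both places your version is tighter than the paper's. For closure of $M$ under the commutator, the paper writes down the basis $E_{ij} - \frac{\alpha_{j}}{\alpha_{i}}E_{ji}$ and computes all brackets case by case; your reformulation that $(v_{ij}) \in M$ means $VA$ is skew-symmetric for $A = \mathrm{diag}(\alpha_{1}, \cdots, \alpha_{n})$ yields the same conclusion in one transpose computation. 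More importantly, the paper never actually checks bracket compatibility of its correspondence: it passes from $Der(J)$ to the matrix space $N$ and then to $M$ with ``it's obvious'' and ``it's easy to verify,'' whereas with the row convention $D(u_{i}) = \sum_{j}v_{ij}u_{j}$ the map $D \mapsto (v_{ij})$ satisfies $\Phi([D, D']) = -[\Phi(D), \Phi(D')]$, i.e.\ it is an anti-isomorphism, exactly as you observed (the same sign phenomenon appears in the paper's identity $[D_{a}, D_{b}] = -D_{[a,b]}$ in Theorem \ref{thm:2.6}). Your correction --- compose with negation, which preserves the subspace $M$ and converts an anti-homomorphism into a homomorphism --- is precisely what is needed to make the stated conclusion $Der(J) \cong M$ literally true; the paper's conclusion survives for this reason, but its map as written is off by that sign. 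So: same approach, but your write-up supplies the two checks the paper omits.
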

\begin{proof}
First we'll show that $M$ is closed under the usual Lie bracket. It's obvious that $\{E_{ij} - \frac{\alpha_{j}}{\alpha_{i}}E_{ji}|1 \leq i < j \leq n\}$ is a basis of $M$. We have
$$[E_{ij} - \frac{\alpha_{j}}{\alpha_{i}}E_{ji}, E_{kl} - \frac{\alpha_{l}}{\alpha_{k}}E_{lk}] =
\left\{
\begin{aligned}
\frac{\alpha_{l}}{\alpha_{i}}(E_{lj} - \frac{\alpha_{j}}{\alpha_{l}}E_{jl}), (i = k, j \neq l)\\
\frac{\alpha_{j}}{\alpha_{i}}(E_{ki} - \frac{\alpha_{i}}{\alpha_{k}}E_{ik}), (i \neq k, j = l)\\
E_{il} - \frac{\alpha_{l}}{\alpha_{i}}E_{li},(j = k, i \neq l)\\
-(E_{kj} - \frac{\alpha_{j}}{\alpha_{k}}E_{jk}),(j \neq k, i = l)
\end{aligned}
\right.
$$
others are all zero.

Hence, $M$ is a Lie algebra under the usual Lie bracket.

Now we will show that $Der(J)$ is isomorphic to $M$. Define $D : J \rightarrow J$ to be a linear map by
\begin{equation}
\left\{
\begin{aligned}
D(1) = a_{0}1 + \sum^{n}_{k = 1}b_{0k}u_{k},\\
D(u_{i}) = a_{i}1 + \sum^{n}_{k = 1}b_{ik}u_{k}.
\end{aligned}
\right.
\end{equation}

Suppose that $D$ is a derivation on $J$, i.e., $D(x \circ y) = D(x) \circ y + x \circ D(y),\quad\forall x, y \in J$. It's only need to verify $D$ satisfies the above equation on base elements.

Take $x = 1$, $y = u_{i}(1 \leq i \leq n)$, we have
\[D(1 \circ u_{i}) = D(u_{i}) = a_{i}1 + \sum^{n}_{k = 1}b_{ik}u_{k},\]
\begin{align*}
&D(1) \circ u_{i} + 1 \circ D(u_{i}) = (a_{0}1 + \sum^{n}_{k = 1}b_{0k}u_{k}) \circ u_{i} + 1 \circ (a_{i}1 + \sum^{n}_{k = 1}b_{ik}u_{k})\\
&= (a_{0} + b_{ii})u_{i} + (\alpha_{i}b_{0i} + a_{i})1 + \sum^{n}_{k = 1, k \neq i}b_{ik}u_{k},
\end{align*}
comparing the coefficients on both sides, we have
\begin{equation}
\left\{
\begin{aligned}
a_{i} = \alpha_{i}b_{0i} + a_{i},\\
b_{ii} = a_{0} + b_{ii},
\end{aligned}
\right.
\end{equation}
note that $\alpha_{i} \neq 0$, we have
\begin{equation}
\left\{
\begin{aligned}
b_{0i} = 0,\\
a_{0} = 0.
\end{aligned}
\right.
\end{equation}

Similarly, take $x = u_{i}, y = u_{j}, (1 \leq i, j \leq n, i \neq j)$, we have
\begin{equation}
\left\{
\begin{aligned}
\alpha_{i}b_{ji} + \alpha_{j}b_{ij} = 0,\\
a_{i} = 0.
\end{aligned}
\right.
\end{equation}
Take $x = y = u_{i}, 1 \leq i \leq n$, we have
\begin{equation}
\left\{
\begin{aligned}
b_{ii} = 0,\\
a_{i} = 0.
\end{aligned}
\right.
\end{equation}

Hence, while $b_{ji} \neq 0(i \neq j)$ satisfying $\alpha_{i}b_{ji} + \alpha_{j}b_{ij} = 0(i \neq j)$ and others are all zero, $D$ is a derivation on $J$. We denote the matrix of $D$ under the basis $\{1, u_{1}, \cdots, u_{n}\}$ by
$$N_{D} =
\begin{pmatrix}
0 & 0 & 0 & 0 & \cdots & 0 & 0\\
0 & 0 & b_{12} & b_{13} & \cdots & b_{1, n - 1} & b_{1n}\\
0 & b_{21} & 0 & b_{23} & \cdots & b_{2, n - 1} & b_{2n}\\
\vdots & \vdots & \vdots & \vdots & \vdots & \vdots & \vdots\\
0 & b_{n1} & b_{n2} & b_{n3} & \cdots & b_{n, n - 1} & 0
\end{pmatrix}
,$$
i.e., $D(1, u_{1}, \cdots, u_{n}) = N_{D}(1, u_{1}, \cdots, u_{n})^{T}$.

We set $N$ be the Lie algebra generated the above matrixes. It's obvious that there is a one-to-one correspondence between $Der(J)$ and $N$ by mapping every $D \in Der(J)$ to $N_{D}$, i.e., $Der(J)$ is isomorphic to $N$.

Define $f : N \rightarrow M$ to be a linear map by $f(N_{D}) = M_{D}$ where
$$M_{D} =
\begin{pmatrix}
0 & b_{12} & b_{13} & \cdots & b_{1, n - 1} & b_{1n}\\
b_{21} & 0 & b_{23} & \cdots & b_{2, n - 1} & b_{2n}\\
\vdots & \vdots & \vdots & \vdots & \vdots & \vdots\\
b_{n1} & b_{n2} & b_{n3} & \cdots & b_{n, n - 1} & 0
\end{pmatrix}
.$$

It's easy to verify that $f$ is an isomorphism between $N$ and $M$. Hence $Der(J)$ is isomorphic to $M$ where $M$ is the Lie algebra generated by all $n \times n$-matrices $(v_{ij})_{n \times n}$ satisfying $\alpha_{i}v_{ji} + \alpha_{j}v_{ij} = 0$.
\end{proof}
\begin{thm}\label{thm:2.8}
Let $J$ be a simple Jordan algebra of type $C$. Then $Der(J)$ is simple if and only if $dim(J) \neq 5$.
\end{thm}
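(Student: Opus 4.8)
The plan is to reduce everything to the explicit Lie algebra produced by Lemma \ref{le:2.7}. Write $n = \dim(J)-1$, so that $\{1,u_1,\dots,u_n\}$ is the given basis. Lemma \ref{le:2.7} identifies $Der(J)$ with the Lie algebra $M$ of all $n\times n$ matrices $(v_{ij})$ satisfying $\alpha_i v_{ji}+\alpha_j v_{ij}=0$, equipped with the basis $\{f_{ij}:=E_{ij}-\tfrac{\alpha_j}{\alpha_i}E_{ji}\mid 1\le i<j\le n\}$ and the four bracket formulas displayed there, so that $\dim M=\binom{n}{2}$. The first thing I would record is a conceptual reading of the defining condition: $\alpha_i v_{ji}+\alpha_j v_{ij}=0$ is exactly skew-symmetry with respect to the symmetric bilinear form whose Gram matrix is $\mathrm{diag}(\alpha_1,\dots,\alpha_n)$, which is nondegenerate precisely because every $\alpha_i\neq 0$. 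Hence $Der(J)\cong\mathfrak{o}_n$, the orthogonal Lie algebra of a nondegenerate symmetric form in $n$ variables over ${\rm F}$, and the condition $\dim(J)\neq 5$ becomes $n\neq 4$. Since for $n\le 2$ the algebra $\mathfrak{o}_n$ is zero or one--dimensional abelian (and such $J$ lie outside the genuine type-$C$ case), the theorem reduces to the statement that, for $n\ge 3$ in characteristic $0$, $\mathfrak{o}_n$ is simple if and only if $n\neq 4$.

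For the exceptional case $n=4$ (i.e.\ $\dim(J)=5$) I would exhibit an explicit splitting realizing the isomorphism $\mathfrak{o}_4\cong\mathfrak{sl}_2\oplus\mathfrak{sl}_2$. Among the six basis vectors $f_{12},f_{13},f_{14},f_{23},f_{24},f_{34}$ one forms three ``self-dual'' combinations $f_{12}+\lambda f_{34}$, $f_{13}+\mu f_{24}$, $f_{14}+\nu f_{23}$ and three ``anti-self-dual'' ones $f_{12}-\lambda f_{34}$, $f_{13}-\mu f_{24}$, $f_{14}-\nu f_{23}$, with suitable scalars $\lambda,\mu,\nu$ built from the $\alpha_i$. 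Using the four bracket formulas of Lemma \ref{le:2.7} one checks that each triple closes under the bracket, that the two triples commute, and that each is therefore a nonzero proper ideal. This shows $Der(J)$ is not simple when $\dim(J)=5$.

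For the converse ($n\ge 3$, $n\neq 4\Rightarrow$ simple) I would argue directly from the explicit relations: take a nonzero ideal $I\lhd M$, pick $0\neq X=\sum_{i<j}c_{ij}f_{ij}\in I$, and repeatedly bracket $X$ against the $f_{kl}$ to isolate a single basis vector $f_{pq}\in I$; the ``index-connectivity'' of the bracket rules (two of the $f$'s sharing an index produce a third one) then propagates $f_{pq}$ to every $f_{ij}$, forcing $I=M$. The failure of this propagation for $n=4$ is precisely the splitting above. Alternatively I could simply invoke the classification of simple Lie algebras: $\mathfrak{o}_3\cong\mathfrak{sl}_2$, $\mathfrak{o}_5\cong\mathfrak{sp}_4$, $\mathfrak{o}_6\cong\mathfrak{sl}_4$ are simple, and $\mathfrak{o}_n$ is the simple algebra of type $B_{(n-1)/2}$ or $D_{n/2}$ for $n\ge 7$, while only $\mathfrak{o}_4$ (type $D_2$) fails.

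The main obstacle is the $n=4$ case together with the simplicity argument in general, and the genuinely delicate point in both is dependence on the ground field. The decomposition $\mathfrak{o}_4\cong\mathfrak{sl}_2\oplus\mathfrak{sl}_2$ requires the scalars $\lambda,\mu,\nu$ to lie in ${\rm F}$, equivalently the discriminant $\alpha_1\alpha_2\alpha_3\alpha_4$ to be a square in ${\rm F}$; over a field where it is not, $\mathfrak{o}_4$ can be an ${\rm F}$-simple non-split form, so the clean ``if and only if'' really wants ${\rm F}$ algebraically closed, consistent with the Albert--Jacobson setting the paper cites. I would therefore either work over an algebraically closed ${\rm F}$ throughout or carry the discriminant hypothesis explicitly, and I would flag the low-dimensional boundary $n\le 2$ as handled by the convention $n\ge 3$ for type $C$.
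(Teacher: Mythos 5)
Your proposal is correct, and it reaches the theorem by a more conceptual route than the paper. The paper never names the Lie algebra it is working with: starting from the basis $\{E_{ij}-\frac{\alpha_j}{\alpha_i}E_{ji}\}$ of Lemma \ref{le:2.7}, it takes a nonzero ideal $I$, runs an induction on the number of nonzero coefficients of an element of $I$ (with separate cases $n=2$, $n=3$, $n\ge 5$) to force a single basis vector into $I$, and then propagates that vector to the whole basis by repeated bracketing --- exactly the ``index-connectivity'' argument you sketch as your first alternative, carried out in full. For $\dim(J)=5$ the paper writes down a three-dimensional ideal, which is (up to normalization of the coefficients) precisely one of the two summands in your self-dual/anti-self-dual splitting $\mathfrak{o}_4\cong\mathfrak{sl}_2\oplus\mathfrak{sl}_2$. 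What your reading buys is the identification $Der(J)\cong\mathfrak{o}_n$ for the nondegenerate form $\mathrm{diag}(\alpha_1,\dots,\alpha_n)$, after which the dichotomy $n\neq 4$ versus $n=4$ can be quoted from the classification of simple Lie algebras; what the paper's computation buys is independence from that machinery, at the cost of a long case analysis that is essentially an expansion of your sketch.

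Your field-theoretic caveat is not pedantry: it identifies a genuine gap in the paper's own argument. The ideal the paper exhibits for $\dim(J)=5$ has coefficients $\sqrt{\alpha_4\alpha_1/(\alpha_2\alpha_3)}$ and $\sqrt{\alpha_3\alpha_1/(\alpha_2\alpha_4)}$, which lie in ${\rm F}$ only when the discriminant $\alpha_1\alpha_2\alpha_3\alpha_4$ is a square in ${\rm F}$, and Theorem \ref{thm:2.8} --- unlike Theorem \ref{thm:2.10} --- carries no algebraic-closure hypothesis. Moreover the defect is in the statement, not merely in the proof: over ${\rm F}=\mathbb{R}$ with $(\alpha_1,\alpha_2,\alpha_3,\alpha_4)=(1,1,1,-1)$ one gets $Der(J)\cong\mathfrak{so}(3,1)$, which is $\mathfrak{sl}_2(\mathbb{C})$ viewed as a real Lie algebra and hence simple, even though $\dim(J)=5$. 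So the ``only if'' direction really does require your added hypothesis (discriminant a square, e.g.\ ${\rm F}$ algebraically closed), and your decision to carry it explicitly is the correct repair. The same goes for your remark on the boundary $n\le 2$: the paper's $n=2$ case silently treats a one-dimensional abelian algebra as ``simple,'' which is another point where the clean if-and-only-if needs a convention to be stated.
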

\begin{proof}
We'll prove our conclusion by contradiction. According to Lemma \ref{le:2.7}, $Der(J)$ has a basis $\{E_{ij} - \frac{\alpha_{j}}{\alpha_{i}}E_{ji}|1 \leq i < j \leq n\}$. Suppose $I$ is a non trivial ideal of $Der(J)$. Take $0 \neq a \in I$ and suppose $a = \sum^{n}_{i, j = 1, i < j}a_{ij}(E_{ij} - \frac{\alpha_{j}}{\alpha_{i}}E_{ji})$.

$\bf Step 1$. We'll show that there exists $1 \leq i_{0} < j_{0} \leq n$ such that $E_{i_{0}j_{0}} - \frac{\alpha_{j_{0}}}{\alpha_{i_{0}}}E_{j_{0}i_{0}} \in I$.

When $n = 2$, the dimension of $Der(J)$ is $1$. The conclusion is clearly true.

When $n = 3$, the dimension of $Der(J)$ is $3$. $Der(J) = \{E_{12} - \frac{\alpha_{2}}{\alpha_{1}}E_{21}, E_{13} - \frac{\alpha_{3}}{\alpha_{1}}E_{31}, E_{23} - \frac{\alpha_{3}}{\alpha_{2}}E_{32}\}$.

Since $a \neq 0$, there exists at least one non zero element in $S = \{a_{ij}|1 \leq i < j \leq 3\}$.

If there only exists one non zero element in $S$, it's obvious that there exists $1 \leq i_{0} < j_{0} \leq 3$ such that $E_{i_{0}j_{0}} - \frac{\alpha_{j_{0}}}{\alpha_{i_{0}}}E_{j_{0}i_{0}} \in I$.

If there exist two non zero elements in $S$,  we might as well set $a_{12}, a_{13} \neq 0$, i.e., $a = a_{12}(E_{12} - \frac{\alpha_{2}}{\alpha_{1}}E_{21}) + a_{13}(E_{13} - \frac{\alpha_{3}}{\alpha_{1}}E_{31})$.
\[[a, E_{13} - \frac{\alpha_{3}}{\alpha_{1}}E_{31}] = -a_{12}\frac{\alpha_{2}}{\alpha_{1}}(E_{23} - \frac{\alpha_{3}}{\alpha_{2}}E_{32}).\]
Hence, there exists $1 \leq i_{0} < j_{0} \leq 3$ such that $E_{i_{0}j_{0}} - \frac{\alpha_{j_{0}}}{\alpha_{i_{0}}}E_{j_{0}i_{0}} \in I$.

If there exist three non zero elements in $S$, i.e, $a = a_{12}(E_{12} - \frac{\alpha_{2}}{\alpha_{1}}E_{21}) + a_{13}(E_{13} - \frac{\alpha_{3}}{\alpha_{1}}E_{31}) + a_{23}(E_{23} - \frac{\alpha_{3}}{\alpha_{2}}E_{32})$.
\[[a, E_{23} - \frac{\alpha_{3}}{\alpha_{2}}E_{32}] = a_{12}(E_{13} - \frac{\alpha_{3}}{\alpha_{1}}E_{31}) - a_{13}\frac{\alpha_{3}}{\alpha_{2}}(E_{12} - \frac{\alpha_{2}}{\alpha_{1}}E_{21}),\]
\[[[a, E_{23} - \frac{\alpha_{3}}{\alpha_{2}}E_{32}], E_{12} - \frac{\alpha_{2}}{\alpha_{1}}E_{21}] =  a_{12}\frac{\alpha_{2}}{\alpha_{1}}(E_{23} - \frac{\alpha_{3}}{\alpha_{2}}E_{32}).\]
Hence, there exists $1 \leq i_{0} < j_{0} \leq 3$ such that $E_{i_{0}j_{0}} - \frac{\alpha_{j_{0}}}{\alpha_{i_{0}}}E_{j_{0}i_{0}} \in I$.

When $n \geq 5$, we set $S = \{a_{ij}|1 \leq i < j \leq n\}$. We prove the conclusion by induction on the number of nonzero elements in $S$, denoted by $k$.

Since $a \neq 0$, there exists at least one non zero element in $S = \{a_{ij}|1 \leq i < j \leq n\}$.

When $k = 1$, it's obvious that there exists $1 \leq i_{0} < j_{0} \leq n$ such that $E_{i_{0}j_{0}} - \frac{\alpha_{j_{0}}}{\alpha_{i_{0}}}E_{j_{0}i_{0}} \in I$.

When $k = 2$, we set $a_{i_{1}j_{1}}, a_{i_{2}j_{2}} \neq 0$, i.e., $a = a_{i_{1}j_{1}}(E_{i_{1}j_{1}} - \frac{\alpha_{j_{1}}}{\alpha_{i_{1}}}E_{j_{1}i_{1}}) + a_{i_{2}j_{2}}(E_{i_{2}j_{2}} - \frac{\alpha_{j_{2}}}{\alpha_{i_{2}}}E_{j_{2}i_{2}})$.

When $i_{1} = i_{2}, j_{1} \neq j_{2}$, we have $i_{2} \neq j_{1}$. Take $l \neq i_{1}, i_{2}, j_{1}, j_{2}$, then
\[[a, E_{j_{1}l} - \frac{\alpha_{l}}{\alpha_{j_{1}}}E_{lj_{1}}] = a_{i_{1}j_{1}}(E_{i_{1}l} - \frac{\alpha_{l}}{\alpha_{i_{1}}}E_{li_{1}}) \in I.\]
When $i_{1} \neq i_{2}, j_{1} = j_{2}$, we have $i_{1} \neq j_{2}$. Take $l \neq i_{1}, i_{2}, j_{1}, j_{2}$, then
\[[a, E_{li_{1}} - \frac{\alpha_{i_{1}}}{\alpha_{l}}E_{i_{1}l}] = -a_{i_{1}j_{1}}(E_{lj_{1}} - \frac{\alpha_{j_{1}}}{\alpha_{l}}E_{j_{1}l}) \in I.\]
When $i_{1} \neq i_{2}, j_{1} \neq j_{2}$, take $l \neq i_{1}, i_{2}, j_{1}, j_{2}$, then
\begin{enumerate}[(1)]
\item When $i_{2} \neq j_{1}$, $[a, E_{j_{1}l} - \frac{\alpha_{l}}{\alpha_{j_{1}}}E_{lj_{1}}] = a_{i_{1}j_{1}}(E_{i_{1}l} - \frac{\alpha_{l}}{\alpha_{i_{1}}}E_{li_{1}}) \in I$.\\
\item When $i_{2} = j_{1}$, $[a, E_{j_{1}l} - \frac{\alpha_{l}}{\alpha_{j_{1}}}E_{lj_{1}}] = a_{i_{1}j_{1}}(E_{i_{1}l} - \frac{\alpha_{l}}{\alpha_{i_{1}}}E_{li_{1}}) + a_{i_{2}j_{2}}\frac{\alpha_{l}}{\alpha_{i_{2}}}(E_{lj_{2}} - \frac{\alpha_{j_{2}}}{\alpha_{l}}E_{j_{2}l}) \in I$;

    $[[a, E_{j_{1}l} - \frac{\alpha_{l}}{\alpha_{j_{1}}}E_{lj_{1}}], E_{lj_{2}} - \frac{\alpha_{j_{2}}}{\alpha_{l}}E_{j_{2}l}] = a_{i_{1}j_{1}}(E_{i_{1}j_{2}} - \frac{\alpha_{j_{2}}}{\alpha_{i_{1}}}E_{j_{2}i_{1}}) \in I$.
\end{enumerate}
Hence, there exists $1 \leq i_{0} < j_{0} \leq n$ such that $E_{i_{0}j_{0}} - \frac{\alpha_{j_{0}}}{\alpha_{i_{0}}}E_{j_{0}i_{0}} \in I$.

Assume that the conclusion holds when there are $k(k < \frac{n(n - 1)}{2})$ nonzero elements in $S$. While there are $k + 1$ nonzero elements in $S$, suppose that $a = \sum^{k + 1}_{l = 1}a_{i_{l}j_{l}}(E_{i_{l}j_{l}} - \frac{\alpha_{j_{l}}}{\alpha_{i_{l}}}E_{j_{l}i_{l}})$.

Write $b = \sum^{k}_{l = 1}a_{i_{l}j_{l}}(E_{i_{l}j_{l}} - \frac{\alpha_{j_{l}}}{\alpha_{i_{l}}}E_{j_{l}i_{l}})$. Then $a = b + a_{i_{k + 1}j_{k + 1}}(E_{i_{k + 1}j_{k + 1}} - \frac{\alpha_{j_{k + 1}}}{\alpha_{i_{k + 1}}}E_{j_{k + 1}i_{k + 1}})$.

According to the assumption, $b$ becomes $p(E_{i^{'}j^{'}} - \frac{\alpha_{j^{'}}}{\alpha_{i^{'}}}E_{j^{'}i^{'}})$ after finite times of Lie brackets where $p$ is a nonzero coefficient. Meanwhile, $a_{i_{k + 1}j_{k + 1}}(E_{i_{k + 1}j_{k + 1}} - \frac{\alpha_{j_{k + 1}}}{\alpha_{i_{k + 1}}}E_{j_{k + 1}i_{k + 1}})$ vanishes or becomes $q(E_{i^{''}j^{''}} - \frac{\alpha_{j^{''}}}{\alpha_{i^{''}}}E_{j^{''}i^{''}})$ after the same finite times Lie brackets where $q$ is a nonzero coefficient. Hence, $a$ becomes $p(E_{i^{'}j^{'}} - \frac{\alpha_{j^{'}}}{\alpha_{i^{'}}}E_{j^{'}i^{'}})$ or $p(E_{i^{'}j^{'}} - \frac{\alpha_{j^{'}}}{\alpha_{i^{'}}}E_{j^{'}i^{'}}) + q(E_{i^{''}j^{''}} - \frac{\alpha_{j^{''}}}{\alpha_{i^{''}}}E_{j^{''}i^{''}})$. We have $p(E_{i^{'}j^{'}} - \frac{\alpha_{j^{'}}}{\alpha_{i^{'}}}E_{j^{'}i^{'}}) \in I$ or $p(E_{i^{'}j^{'}} - \frac{\alpha_{j^{'}}}{\alpha_{i^{'}}}E_{j^{'}i^{'}}) + q(E_{i^{''}j^{''}} - \frac{\alpha_{j^{''}}}{\alpha_{i^{''}}}E_{j^{''}i^{''}}) \in I$ since $I$ is an ideal of $Der(J)$. According to the case of $k = 2$, the conclusion holds.

Therefore, there exists $1 \leq i_{0} < j_{0} \leq n$ such that $E_{i_{0}j_{0}} - \frac{\alpha_{j_{0}}}{\alpha_{i_{0}}}E_{j_{0}i_{0}} \in I$.

$\bf Step 2$. We'll show that $E_{n - 1, n} - \frac{\alpha_{n}}{\alpha_{n - 1}}E_{n, n - 1} \in I$.

We have
\[[E_{i_{0}j_{0}} - \frac{\alpha_{j_{0}}}{\alpha_{i_{0}}}E_{j_{0}i_{0}}, E_{j_{0}n} - \frac{\alpha_{n}}{\alpha_{j_{0}}}E_{nj_{0}}] = E_{i_{0}n} - \frac{\alpha_{n}}{\alpha_{i_{0}}}E_{ni_{0}} \in I,\]
\begin{enumerate}[(1)]
\item if $i_{0} = n - 1$, the conclusion is proved;\\

\item if $i_{0} < n - 1$, then $[E_{i_{0}n} - \frac{\alpha_{n}}{\alpha_{i_{0}}}E_{ni_{0}}, E_{i_{0}n - 1} - \frac{\alpha_{n - 1}}{\alpha_{i_{0}}}E_{n - 1, i_{0}}] = \frac{\alpha_{n - 1}}{\alpha_{i_{0}}}(E_{n - 1, n} - \frac{\alpha_{n}}{\alpha_{n - 1}}E_{n, n - 1}) \in I$.
\end{enumerate}

$\bf Step 3$. We'll show that $I = Der(J)$.

Since
\[[E_{n - 1, n} - \frac{\alpha_{n}}{\alpha_{n - 1}}E_{n, n - 1}, E_{k, n - 1} - \frac{\alpha_{n - 1}}{\alpha_{k}}E_{n - 1, k}] = -(E_{kn} - \frac{\alpha_{n}}{\alpha_{k}}E_{nk}) \in I,(1 \leq k < n - 1),\]
we have $E_{1n} - \frac{\alpha_{n}}{\alpha_{1}}E_{n1}, \cdots, E_{n - 1, n} - \frac{\alpha_{n}}{\alpha_{n - 1}}E_{n, n - 1} \in I$.

Since
\[[E_{n - 1, n} - \frac{\alpha_{n}}{\alpha_{n - 1}}E_{n, n - 1}, E_{kn} - \frac{\alpha_{n}}{\alpha_{k}}E_{nk}] = \frac{\alpha_{n}}{\alpha_{n - 1}}(E_{k,n - 1} - \frac{\alpha_{n - 1}}{\alpha_{k}}E_{n - 1, k}) \in I,(1 \leq k < n - 1),\]
we have $E_{1, n - 1} - \frac{\alpha_{n - 1}}{\alpha_{1}}E_{n - 1, 1}, \cdots, E_{n - 2, n - 1} - \frac{\alpha_{n - 1}}{\alpha_{n - 2}}E_{n - 1, n - 2} \in I$.

Repeat the process above, we can get all $\{E_{ij} - \frac{\alpha_{j}}{\alpha_{i}}E_{ji}|1 \leq i < j \leq n\} \in I$, i.e., $Der(J) \subseteq I$, which implies that $Der(J) = I$, contradiction. Therefore, $Der(J)$ is simple.

When $n = 4$, then $Der(J)$ has a basis $\{E_{12} - \frac{\alpha_{2}}{\alpha_{1}}E_{21}, E_{13} - \frac{\alpha_{3}}{\alpha_{1}}E_{31}, E_{14} - \frac{\alpha_{4}}{\alpha_{1}}E_{41}, E_{23} - \frac{\alpha_{3}}{\alpha_{2}}E_{32}, E_{24} - \frac{\alpha_{4}}{\alpha_{2}}E_{42}, E_{34} - \frac{\alpha_{4}}{\alpha_{3}}E_{43}\}$. Set $I = \{ -\sqrt{\frac{\alpha_{4}\alpha_{1}}{\alpha_{2}\alpha_{3}}}(E_{12} - \frac{\alpha_{2}}{\alpha_{1}}E_{21}) + (E_{34} - \frac{\alpha_{4}}{\alpha_{3}}E_{43}), \sqrt{\frac{\alpha_{4}\alpha_{1}}{\alpha_{2}\alpha_{3}}}(E_{13} - \frac{\alpha_{3}}{\alpha_{1}}E_{31}) + (E_{24} - \frac{\alpha_{4}}{\alpha_{2}}E_{42}), -\sqrt{\frac{\alpha_{3}\alpha_{1}}{\alpha_{2}\alpha_{4}}}(E_{14} - \frac{\alpha_{4}}{\alpha_{1}}E_{41}) + (E_{23} - \frac{\alpha_{3}}{\alpha_{2}}E_{32})\}$. It's easy to verify that $I$ is an ideal of $Der(J)$. Hence, When $dim(J) = 5$, $Der(J)$ is not simple.
\end{proof}
\begin{lem}\cite{CS1}\label{le:2.9}
Let $\rm{F}$ be an algebraically closed field of characteristic $0$. Then the derivation algebra of the Jordan algebra of type $D$ is the Lie algebra $F_{4}$.
\end{lem}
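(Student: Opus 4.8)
The type $D$ Jordan algebra here is the exceptional Albert algebra $J = H_{3}(\mathbb{O})$ of $3 \times 3$ Hermitian matrices over the octonion (Cayley) algebra $\mathbb{O}$, equipped with the product $a \circ b = \frac{1}{2}(ab + ba)$; over an algebraically closed field $\mathrm{F}$ of characteristic $0$ this is the $27$-dimensional exceptional simple Jordan algebra. Following the cited source \cite{CS1} (Chevalley--Schafer), the plan is to show that $Der(J)$ is a simple Lie algebra of dimension $52$ and then to identify it via the classification of simple Lie algebras, in which $F_{4}$ is the unique simple type of dimension $52$.

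First I would record two structural facts about an arbitrary $D \in Der(J)$. From $1 \circ 1 = 1$ one gets $D(1) = 2\,D(1) \circ 1 = 2\,D(1)$, so $D(1) = 0$; since $D$ annihilates the unit, it is standard that $D$ is skew with respect to the nondegenerate symmetric trace form $T(x \circ y)$, i.e. $T(D(x) \circ y) + T(x \circ D(y)) = 0$. Hence $Der(J)$ stabilises the $26$-dimensional trace-zero subspace $J_{0} = \{x \in J : T(x) = 0\}$, which will serve as the candidate $26$-dimensional irreducible module for $F_{4}$.

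The core computational step is the dimension count $\dim Der(J) = 52$. The efficient route uses the standard splitting of a derivation of $H_{3}(\mathbb{O})$: every $D$ decomposes as a derivation of $\mathbb{O}$ applied to all matrix entries, contributing $Der(\mathbb{O}) = G_{2}$ of dimension $14$, together with an inner derivation built from commutators $[L_{a}, L_{b}]$ of Jordan multiplications, parametrised by the $38$-dimensional space of trace-zero anti-Hermitian $3 \times 3$ octonionic matrices (the three purely imaginary diagonal entries subject to the single trace-zero relation contribute $3 \times 7 - 7 = 14$, and the three free off-diagonal octonions contribute $24$), for a total of $14 + 38 = 52$. Setting up the Peirce decomposition relative to the diagonal primitive idempotents $E_{11}, E_{22}, E_{33}$ and checking that these two families span $Der(J)$ with no further relations is exactly where the work lies. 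A useful internal check is that the subalgebra stabilising all three idempotents has dimension $28$ and is of type $D_{4}$, acting on the three off-diagonal octonion spaces through triality, while the remaining $24$ dimensions move the idempotents, so that $28 + 24 = 52$ as well.

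Finally, I would establish simplicity and conclude. Since $J$ is simple with nondegenerate trace form, a short argument shows $Der(J)$ acts irreducibly on $J_{0}$; irreducibility forces the Killing form of $Der(J)$ to be nondegenerate, so $Der(J)$ is semisimple, and then an irreducible faithful action of an ideal on the whole of $J_{0}$ forces $Der(J)$ to be simple. Because $\mathrm{F}$ is algebraically closed of characteristic $0$, the classification of simple Lie algebras applies, and $F_{4}$ is the only simple Lie algebra of dimension $52$, giving $Der(J) \cong F_{4}$. The main obstacle is the combination of the exact dimension count and the irreducibility argument: the bookkeeping of the Peirce components and the octonionic relations is delicate, and verifying that $J_{0}$ has no proper subspace invariant under both the $G_{2}$ piece and the inner derivations is the technical heart of the Chevalley--Schafer computation being invoked. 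As an independent confirmation one could instead exhibit a Cartan subalgebra inside the idempotent-stabiliser and compute the root system directly, checking that it is the $F_{4}$ system; this is more convincing but substantially more computational.
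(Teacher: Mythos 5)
The paper does not actually prove Lemma \ref{le:2.9}: it is imported wholesale from Chevalley--Schafer \cite{CS1}, so there is no internal argument to compare against, and your sketch is in effect a reconstruction of the cited source. Its skeleton is the right one: for the $27$-dimensional Albert algebra $J = H_{3}(\mathbb{O})$ one shows $\dim Der(J) = 52$, that $Der(J)$ is simple, and then invokes the classification, in which $52$ occurs exactly once (the dimensions $n(n+2)$, $n(2n+1)$, $n(2n-1)$, $14$, $52$, $78$, $133$, $248$ contain $52$ only for $F_{4}$). Your bookkeeping $14 + 38 = 52$ (entrywise $Der(\mathbb{O}) = G_{2}$ plus the maps induced by trace-zero anti-Hermitian octonionic matrices) and the cross-check $28 + 24 = 52$ via the $D_{4}$ stabiliser of the idempotent frame both agree with the classical computation; note, though, that the hard content --- that these two families exhaust $Der(J)$, i.e.\ the upper bound $\dim Der(J) \leq 52$, and that $x \mapsto ax - xa$ is a derivation at all despite the non-associativity of $\mathbb{O}$ (this is where the trace-zero hypothesis on $a$ is used) --- is exactly what you delegate back to the citation.

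Two of your bridging claims in the simplicity step are false as general principles and need repair. First, ``irreducibility forces the Killing form to be nondegenerate'' is wrong: a faithful irreducible action only yields reductivity ($\mathfrak{gl}_{n}$ acts irreducibly on $\mathrm{F}^{n}$ with degenerate Killing form). The fix is available in your own setup: by Schur's lemma a central element of $Der(J)$ acts on $J_{0}$ as a scalar, and a scalar operator that is skew for the nondegenerate trace form is zero in characteristic $0$, so the centre vanishes and reductive becomes semisimple. Second, ``an irreducible faithful action forces $Der(J)$ to be simple'' is also wrong in general: a sum of two semisimple ideals $\mathfrak{g}_{1} \oplus \mathfrak{g}_{2}$ acts faithfully and irreducibly on any tensor product $V_{1} \otimes V_{2}$ of faithful irreducibles. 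Here the numerology rescues you: such a splitting would force $26 = \dim V_{1} \cdot \dim V_{2}$ with both factors at least $2$, hence $\{\dim V_{1}, \dim V_{2}\} = \{2, 13\}$; then $\mathfrak{g}_{1} \cong \mathfrak{sl}_{2}$ has dimension $3$, so $\mathfrak{g}_{2}$ would be a semisimple Lie algebra of dimension $49$ with a faithful irreducible $13$-dimensional module, and since $13$ is prime $\mathfrak{g}_{2}$ would itself have to be simple --- but no simple Lie algebra has dimension $49$, a contradiction. Alternatively, the route you relegate to ``independent confirmation'' --- exhibiting a rank-$4$ Cartan subalgebra and computing the $48$ roots --- is in substance what Chevalley and Schafer actually do, so it is less a cross-check than the argument itself.
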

\begin{thm}\label{thm:2.10}
The derivation algebra of the Jordan algebra of type $D$ over an algebraically closed field of characteristic $0$ is simple.
\end{thm}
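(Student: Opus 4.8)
The plan is to let Lemma~\ref{le:2.9} do essentially all of the work. By that lemma, over an algebraically closed field $\rm{F}$ of characteristic $0$ the derivation algebra $Der(J)$ of the Jordan algebra $J$ of type $D$ is isomorphic to the Lie algebra $F_{4}$. Since being simple is preserved under Lie-algebra isomorphism, the statement reduces entirely to the assertion that $F_{4}$ is a simple Lie algebra.

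To justify that assertion I would invoke the Killing--Cartan classification of finite-dimensional simple Lie algebras over an algebraically closed field of characteristic $0$. In that classification the simple Lie algebras fall into the four classical families $A_{\ell}, B_{\ell}, C_{\ell}, D_{\ell}$ together with the five exceptional types $G_{2}, F_{4}, E_{6}, E_{7}, E_{8}$, each of which is by definition simple (its root system is irreducible, equivalently its Dynkin diagram is connected, so it admits no proper nonzero ideal). As $F_{4}$ is one of these exceptional simple Lie algebras, it is simple. Composing with the isomorphism of Lemma~\ref{le:2.9} gives that $Der(J)$ is simple, which is exactly the claim.

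There is no genuine obstacle here: the entire content of the theorem has already been packaged into Lemma~\ref{le:2.9}, and simplicity of $F_{4}$ is a classical, textbook fact. The only thing worth being slightly careful about is the hypothesis that the field be algebraically closed of characteristic $0$, which is precisely the standing assumption of Lemma~\ref{le:2.9} and is also the setting in which the Killing--Cartan classification (and hence the simplicity of $F_{4}$) is valid; so the hypotheses match up with nothing to check. If one preferred a self-contained argument one could instead exhibit the root-space decomposition of $F_{4}$ and verify directly that its root system is irreducible, but this is unnecessary given the classification, and I would simply cite the standard reference.
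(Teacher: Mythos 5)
Your proposal is correct and follows exactly the paper's own argument: cite Lemma~\ref{le:2.9} to identify $Der(J)$ with the Lie algebra $F_{4}$, then conclude by the simplicity of $F_{4}$ (the paper simply asserts this classical fact, while you additionally justify it via the Killing--Cartan classification). There is nothing further to add.
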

\begin{proof}
According to Lemma \ref{le:2.9}, the derivation algebra of the Jordan algebra of type $D$ over an algebraically closed field of characteristic $0$ is the Lie algebra $F_{4}$. Note that $F_{4}$ is simple, the proof is completed.
\end{proof}

\section{Triple derivations of Jordan algebras}\label{se:3}

\begin{defn}
Let $J$ be a Jordan algebra over a field $\rm{F}$. A linear map $D : J \rightarrow J$ is called a triple derivation on $J$ if it satisfies
\[D((x \circ y) \circ z) = (D(x) \circ y) \circ z + (x \circ D(y)) \circ z + (x \circ y) \circ D(z),\quad\forall x, y, z \in J.\]
\end{defn}

It's easy to verify that derivations are all triple derivations for Jordan algebras. But the reverse is not always true. One can see the following example.
\begin{ex}\label{ex:3.2}
Let $J$ be a Jordan algebra with a basis $\{e_{1}, e_{2}\}$. And the multiplication table is
\[e_{1} \circ e_{1} = e_{2} \circ e_{2} = e_{1} + e_{2}, e_{1} \circ e_{2} = e_{2} \circ e_{1} = -e_{1} - e_{2}.\]
Then $J$ is a nilpotent Jordan algebra since $J^{3} = (J \circ J) \circ J = 0$. Therefore any linear map on $J$ is a triple derivation on $J$. Take $D_{0} : J \rightarrow J$ to be a linear map with
\[D_{0}(e_{1}) = D_{0}(e_{2}) = e_{1}.\]
Then we have
\[D_{0}(e_{1} \circ e_{1}) = D_{0}(e_{1} + e_{2}) = D_{0}(e_{1}) + D_{0}(e_{2}) = 2e_{1},\]
\[D_{0}(e_{1}) \circ e_{1} + e_{1} \circ D_{0}(e_{1}) = 2e_{1} \circ e_{1} = 2(e_{1} + e_{2}),\]
since
\[D_{0}(e_{1} \circ e_{1}) \neq D_{0}(e_{1}) \circ e_{1} + e_{1} \circ D_{0}(e_{1}),\]
$D_{0}$ is not a derivation on $J$. Hence, $TDer(J) \neq Der(J)$.
\end{ex}
\begin{lem}
For any Jordan algebra $J$, $TDer(J)$ is closed under the usual Lie bracket.
\end{lem}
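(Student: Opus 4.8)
The plan is to reduce the statement to the standard fact that the derivations of any multilinear operation form a Lie algebra. Since $TDer(J)$ is visibly a linear subspace of $\mathrm{End}(J)$, it suffices to show that for any $D_1, D_2 \in TDer(J)$ the commutator $[D_1, D_2] = D_1 D_2 - D_2 D_1$ again satisfies the triple derivation identity. The defining condition says precisely that each $D_i$ acts as a derivation with respect to the trilinear operation $T(x,y,z) = (x \circ y) \circ z$, so I would simply carry out the usual Leibniz-type computation for the commutator of two such derivations.

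Concretely, I would fix $x, y, z \in J$ and compute $D_1 D_2\bigl((x\circ y)\circ z\bigr)$ by applying the triple derivation identity first for $D_2$, and then applying it for $D_1$ to each of the three resulting terms. This yields nine terms in total: three in which the second-order operator $D_1 D_2$ lands on a single slot, and six ``mixed'' terms in which $D_1$ and $D_2$ act on two distinct slots. I would then write down the analogous expansion of $D_2 D_1\bigl((x\circ y)\circ z\bigr)$, obtained by interchanging the roles of $D_1$ and $D_2$.

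Subtracting the two expansions is where the computation pays off: each of the six mixed terms from the first expansion is matched by an identical mixed term in the second expansion (the term carrying $D_1$ on one slot and $D_2$ on another occurs in both, with the same sign), so they cancel in pairs. What survives is exactly
\[
[D_1,D_2]\bigl((x\circ y)\circ z\bigr) = \bigl([D_1,D_2](x)\circ y\bigr)\circ z + \bigl(x\circ [D_1,D_2](y)\bigr)\circ z + (x\circ y)\circ [D_1,D_2](z),
\]
which is the triple derivation identity for $[D_1,D_2]$, establishing closure under the bracket.

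The argument involves no genuine conceptual obstacle; the only thing requiring care is the bookkeeping of the nine terms and their signs, i.e. verifying that precisely the six mixed terms cancel while the three second-order terms recombine into commutators acting on the individual slots. Since the triple product $(x\circ y)\circ z$ is not symmetric in its three arguments, I would keep the slots in a fixed order throughout the computation and refrain from using any interchange between them when pairing up terms for cancellation.
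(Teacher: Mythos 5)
Your proposal is correct and follows essentially the same argument as the paper: expand $D_1D_2$ and $D_2D_1$ on $(x\circ y)\circ z$ via the triple derivation identity, observe that the six mixed terms cancel in pairs, and recombine the remaining second-order terms into the commutator acting on each slot. No gaps.
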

\begin{proof}
$\forall D_{1}, D_{2} \in TDer(J)$, $\forall x, y, z \in J$,
\begin{align*}
&[D_{1}, D_{2}]((x \circ y) \circ z) = (D_{1}D_{2} - D_{2}D_{1})((x \circ y) \circ z)\\
&= D_{1}D_{2}((x \circ y) \circ z) - D_{2}D_{1}((x \circ y) \circ z)\\
&= D_{1}((D_{2}(x) \circ y) \circ z + (x \circ D_{2}(y)) \circ z + (x \circ y) \circ D_{2}(z)) - D_{2}((D_{1}(x) \circ y) \circ z +\\
&(x \circ D_{1}(y)) \circ z + (x \circ y) \circ D_{1}(z))\\
&= (D_{1}D_{2}(x) \circ y) \circ z + (D_{2}(x) \circ D_{1}(y)) \circ z + (D_{2}(x) \circ y) \circ D_{1}(z)\\
&+ (D_{1}(x) \circ D_{2}(y)) \circ z + (x \circ D_{1}D_{2}(y)) \circ z + (x \circ D_{2}(y)) \circ D_{1}(z)\\
&+ (D_{1}(x) \circ y) \circ D_{2}(z) + (x \circ D_{1}(y)) \circ D_{2}(z) + (x \circ y) \circ D_{1}D_{2}(z)\\
&- (D_{2}D_{1}(x) \circ y) \circ z - (D_{1}(x) \circ D_{2}(y)) \circ z - (D_{1}(x) \circ y) \circ D_{2}(z)\\
&- (D_{2}(x) \circ D_{1}(y)) \circ z - (x \circ D_{2}D_{1}(y)) \circ z - (x \circ D_{1}(y)) \circ D_{2}(z)\\
&- (D_{2}(x) \circ y) \circ D_{1}(z) - (x \circ D_{2}(y)) \circ D_{1}(z) - (x \circ y) \circ D_{2}D_{1}(z)\\
&= ([D_{1}, D_{2}](x) \circ y) \circ z + (x \circ [D_{1}, D_{2}](y)) \circ z + (x \circ y) \circ [D_{1}, D_{2}](z).
\end{align*}

Hence, $[D_{1}, D_{2}] \in TDer(J)$. The lemma is proved.
\end{proof}
\begin{thm}\label{thm:3.4}
Suppose that $J$ is a Jordan algebra with unit $1$ and $D : J \rightarrow J$ is a triple derivation. Then $D$ is a derivation on $J$ in the case the characteristic of the field $\rm{F}$ is not $2$. Moreover, we have $TDer(J) = Der(J)$.
\end{thm}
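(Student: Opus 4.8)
The plan is to exploit the unit $1$ by specializing the triple derivation identity
\[D((x \circ y) \circ z) = (D(x) \circ y) \circ z + (x \circ D(y)) \circ z + (x \circ y) \circ D(z)\]
at well-chosen arguments. The guiding observation is that setting $z = 1$ collapses every triple product into an ordinary product, since $a \circ 1 = a$ for all $a \in J$. With $z = 1$ the left side becomes $D(x \circ y)$ and the right side becomes $D(x) \circ y + x \circ D(y) + (x \circ y) \circ D(1)$, so that
\[D(x \circ y) = D(x) \circ y + x \circ D(y) + (x \circ y) \circ D(1).\]
Thus $D$ differs from a derivation only by the correction term $(x \circ y) \circ D(1)$, and the whole theorem reduces to showing that $D(1) = 0$.

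To establish $D(1) = 0$ I would feed the most degenerate arguments into the identity, taking $x = y = z = 1$. Since $1 \circ 1 = 1$, the left side is $D(1)$, while each of the three summands on the right also equals $D(1)$, giving $D(1) = 3D(1)$, that is, $2D(1) = 0$. This is precisely where the hypothesis $\mathrm{char}\,\rm{F} \neq 2$ enters: it forces $D(1) = 0$.

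Substituting $D(1) = 0$ into the displayed relation yields $D(x \circ y) = D(x) \circ y + x \circ D(y)$ for all $x, y \in J$, which is exactly the derivation identity, so $D \in Der(J)$. For the final assertion, the inclusion $Der(J) \subseteq TDer(J)$ is routine: applying the Leibniz rule twice to $(x \circ y) \circ z$ reproduces exactly the three terms of the triple derivation identity. Combining this with the inclusion $TDer(J) \subseteq Der(J)$ just proved gives $TDer(J) = Der(J)$.

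I do not expect a genuine obstacle here: once the two specializations ($z = 1$ and $x = y = z = 1$) are identified, the argument is short and purely computational. The only point requiring care is to keep track of the extra term $(x \circ y) \circ D(1)$ produced by the $z = 1$ specialization and to recognize that it is controlled entirely by the single element $D(1)$, whose vanishing is then delivered by the fully degenerate specialization together with the characteristic hypothesis.
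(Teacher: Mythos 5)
Your proof is correct and follows essentially the same route as the paper: the specialization $x = y = z = 1$ gives $2D(1) = 0$, hence $D(1) = 0$ in characteristic $\neq 2$, and the specialization $z = 1$ then yields the derivation identity. The only cosmetic difference is the order (you derive the relation with the correction term $(x \circ y) \circ D(1)$ first and then kill $D(1)$, while the paper proves $D(1) = 0$ first), which changes nothing of substance.
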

\begin{proof}
$\forall D \in TDer(J)$, we have
\[D(1) = D((1 \circ 1) \circ 1) = (D(1) \circ 1) \circ 1 + (1 \circ D(1)) \circ 1 + (1 \circ 1) \circ D(1) = 3D(1).\]

Note that $char \rm{F} \neq 2$, we have $D(1) = 0$.

For all $x, y \in J$,
\begin{align*}
&D(x \circ y) = D((x \circ y) \circ 1) = (D(x) \circ y) \circ 1 + (x \circ D(y)) \circ 1 + (x \circ y) \circ D(1)\\
&= D(x) \circ y + x \circ D(y),
\end{align*}
which implies that $D \in Der(J)$.

Hence, we have $TDer(J) \subseteq Der(J)$. Therefore, we have $TDer(J) = Der(J)$.
\end{proof}
\begin{lem}\cite{A1}\label{le:3.5}
Let $J$ be a semi-simple Jordan algebra over a field $\rm{F}$ of characteristic $0$. Then $J$ has the unique unit.
\end{lem}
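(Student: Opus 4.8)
The plan is to separate the two assertions. \textbf{Uniqueness} is immediate and I would dispose of it first: if $e$ and $e'$ are both units of $J$, then $e = e \circ e' = e' \circ e = e'$, so at most one unit can exist. Everything therefore reduces to producing \emph{a} unit.

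For existence I would first reduce to the simple case using the decomposition $J = \oplus_{i=1}^{s} J_i$ of Lemma \ref{le:2.1}, where each $J_i$ is a simple ideal. The crucial structural fact — already exploited in the proof of Theorem \ref{thm:2.2} — is that, the $J_i$ being ideals with pairwise trivial intersection inside a direct sum, one has $J_i \circ J_j \subseteq J_i \cap J_j = \{0\}$ whenever $i \neq j$. Granting that each simple summand $J_i$ carries a unit $e_i$, I would set $e = \sum_{i=1}^{s} e_i$ (a finite sum, since the decomposition is finite) and verify it is a unit of $J$: writing an arbitrary $x \in J$ as $x = \sum_i x_i$ with $x_i \in J_i$, the orthogonality $e_i \circ x_j = 0$ for $i \neq j$ together with $e_i \circ x_i = x_i$ gives $e \circ x = \sum_i x_i = x$. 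Thus the whole problem collapses to the claim that every simple summand of the kind under consideration possesses a unit.

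This last point is where the real content lies, and I would settle it through Albert's classification of simple Jordan algebras over a field of characteristic $0$ into the four families A, B, C, D recalled after Lemma \ref{le:2.1} (following \cite{FN1}). In each family a unit is visible by inspection: for type A the algebra is $\mathscr{A}^{+}$ and the associative identity of the simple algebra $\mathscr{A}$ serves as the Jordan unit; for type B the identity of $\mathscr{A}$ is $P$-symmetric, hence lies in $H(\mathscr{A}, P)$ and is its unit; for type C the element denoted $1$ is a unit by the defining relations $u_i^2 = \alpha_i 1$, $u_i \circ u_j = 0$; and for type D the identity of the exceptional algebra $M^{8}_{3}$ does the job. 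Combining this with the reduction above yields a unit for $J$, and uniqueness has already been shown.

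The main obstacle, as the structure of the argument makes clear, is precisely the existence of the unit in the simple case: this is the one genuinely deep input. I would either quote it from the classification as above, or, if a more intrinsic route is wanted, derive it from idempotent and Peirce theory — in a finite-dimensional semi-simple Jordan algebra over a field of characteristic $0$ the radical vanishes, so the Peirce-$0$ component relative to a maximal orthogonal family of idempotents, being a subalgebra without nonzero idempotents and hence nil, must be zero, which forces the sum of those idempotents to act as a unit. The reduction and the uniqueness step are routine; the weight of the lemma rests entirely on this simple-case assertion.
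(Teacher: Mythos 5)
The paper offers no proof of this lemma at all: it is quoted directly from Albert \cite{A1}. So there is nothing internal to compare you against; the only question is whether your argument stands on its own, and its main line does. Uniqueness ($e = e \circ e' = e'$) is correct; the reduction via Lemma \ref{le:2.1} is correct, since $J_i \circ J_j \subseteq J_i \cap J_j = \{0\}$ for $i \neq j$ makes the sum of the units of the simple summands a unit of $J$; and in each of the four classes A--D recalled from \cite{FN1} a unit is indeed visible by inspection (the identity of $\mathscr{A}$ for type A, the same identity, being $P$-symmetric, for type B, the basis element $1$ for type C, the identity matrix of $M^{8}_{3}$ for type D). Note that this argument, like the paper's citation, tacitly assumes $J$ has a finite basis, since Lemma \ref{le:2.1} and the classification are finite-dimensional statements; the lemma as stated omits this hypothesis, but that is the paper's imprecision, not yours.

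Two caveats. First, deducing the simple case from the classification is logically admissible but proof-theoretically backwards: in Albert's and Jacobson--Jacobson's development the existence of a unit in a semi-simple Jordan algebra is proved \emph{before} the classification (via principal idempotents), and the classification rests on it. Your derivation is a valid inference from a true theorem, but the economical citation is Albert's unit theorem itself, which is exactly what the paper does. Second, your ``more intrinsic'' Peirce-theoretic sketch has a genuine gap: showing that the Peirce-$0$ component $J_{0}(e)$ of a principal idempotent $e$ vanishes does not yet make $e$ a unit; one must also kill the Peirce-$\frac{1}{2}$ component $J_{1/2}(e)$, and that is where the substance of Albert's argument lies (he shows $J_{1/2}(e) + J_{0}(e)$ is contained in the radical). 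Since you offer that route only as an alternative, your main proof is unaffected, but as written the sketch would not close.
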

\begin{lem}\cite{N1}\label{le:3.6}
Every derivation of a semi-simple Jordan algebra with a finite basis over a field of characteristic $0$ is inner.
\end{lem}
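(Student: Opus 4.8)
The plan is to recover Jacobson's theorem structurally: first reduce from the semi-simple case to the simple case, then attack the simple case with the non-degenerate trace form that characterises semi-simplicity in characteristic $0$. By Lemma \ref{le:2.1} write $J = \oplus_{i=1}^{s} J_{i}$ with the $J_{i}$ simple ideals. I would begin by showing that every $D \in Der(J)$ preserves each $J_{i}$: since distinct simple ideals satisfy $J_{i} \circ J_{j} = \{0\}$, for $x_{i} \in J_{i}$ and $z_{j} \in J_{j}$ with $i \neq j$ we get $0 = D(x_{i} \circ z_{j}) = D(x_{i}) \circ z_{j} + x_{i} \circ D(z_{j})$, and separating the $J_{i}$- and $J_{j}$-components (each simple ideal being unital, so that its own unit detects its elements) forces $D(J_{i}) \subseteq J_{i}$. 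Hence $Der(J) = \oplus_{i} Der(J_{i})$ and $Inn(J) = \oplus_{i} Inn(J_{i})$, so it suffices to treat a single simple $J$. Note that Theorem \ref{thm:2.2}(2) does not apply directly, since a unital $J$ has $C(J) \neq \{0\}$; the unit-based argument above replaces it.

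For the simple case I would record the standard identities: for a derivation $D$ one has $[D, R_{a}] = R_{Da}$, each $[R_{a}, R_{b}]$ is a derivation, and therefore $Inn(J) := \mathrm{span}\{[R_{a}, R_{b}]\}$ is an ideal of $Der(J)$ because $[D, [R_{a}, R_{b}]] = [R_{Da}, R_{b}] + [R_{a}, R_{Db}] \in Inn(J)$. The goal becomes $Der(J) = Inn(J)$. The key tool is the symmetric trace form $\tau(x,y) = \mathrm{tr}(R_{x \circ y})$ on $J$, which for semi-simple $J$ in characteristic $0$ is non-degenerate, associative, and $Der(J)$-invariant, i.e. $\tau(Dx, y) + \tau(x, Dy) = 0$. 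On $Der(J)$ itself I would use $\kappa(D_{1}, D_{2}) = \mathrm{tr}(D_{1} D_{2})$, which is symmetric and invariant under the adjoint action.

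The argument then mimics the classical proof that derivations of a semi-simple Lie algebra are inner: non-degeneracy of $\tau$ should force $\kappa$ to restrict non-degenerately to the ideal $Inn(J)$, giving $Der(J) = Inn(J) \oplus Inn(J)^{\perp}$; and for $D \in Inn(J)^{\perp}$, the invariance identity $\kappa([D,x], y) = \kappa(D, [x,y])$ with $[x,y] \in Inn(J)$ yields $[D, Inn(J)] \subseteq Inn(J) \cap Inn(J)^{\perp} = \{0\}$. \emph{The main obstacle is the final implication} $[D, Inn(J)] = \{0\} \Rightarrow D = 0$. Unlike the Lie case, where $0 = [D, \mathrm{ad}\,x] = \mathrm{ad}(Dx)$ immediately places $Dx$ in the trivial centre, here the relation $[R_{Da}, R_{b}] + [R_{a}, R_{Db}] = 0$ does not visibly give $R_{Da} = 0$, because inner Jordan derivations are ``second order''. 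Closing this gap is exactly the point where one must invoke the semisimplicity (separability) of the multiplication envelope of $J$, equivalently the vanishing $H^{1}(J,J) = 0$ for separable Jordan algebras; making it precise is the substantive part of the proof.

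As a pragmatic completion that stays inside the tools already available, I would instead finish the simple case type by type. For type $D$, $Der(J) = F_{4}$ is simple by Lemma \ref{le:2.9}, so the non-zero ideal $Inn(J)$ must equal all of $Der(J)$. For types $A$ and $B$, Lemmas \ref{le:2.3} and \ref{le:2.4} realise every derivation as $D_{d} : a \mapsto [a,d]$, and the computation $[R_{a}, R_{b}] = \tfrac{1}{4}\,\mathrm{ad}[a,b]$ together with the decomposition $\mathscr{A} = Z(\mathscr{A}) \oplus [\mathscr{A}, \mathscr{A}]$ for a simple associative algebra in characteristic $0$ shows that each $D_{d}$ is inner. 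For type $C$, Lemma \ref{le:2.7} gives $Der(J)$ explicitly as the matrix Lie algebra $M$, and one checks directly that these derivations arise from Jordan multiplications, hence are inner. I expect the uniform trace-form argument to be conceptually cleaner, with its delicate last step being the principal difficulty, while the type-by-type route trades that difficulty for a finite, if tedious, verification.
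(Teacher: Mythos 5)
First, a point of comparison: the paper does not prove this statement at all. Lemma \ref{le:3.6} is imported verbatim from Jacobson's paper \cite{N1} and used as a black box, so there is no internal proof to measure your attempt against; your proposal must stand as a free\-/standing reconstruction of Jacobson's theorem. Your reduction to the simple case is correct, and the observation that Theorem \ref{thm:2.2}(2) cannot be invoked (a unital algebra has $1 \in C(J)$, so $C(J) \neq \{0\}$) together with the unit\-/of\-/each\-/summand argument is a genuine and valid repair. The problem is the simple case. Your primary, trace\-/form route is, as you yourself concede, not a proof: the implication $[D, Inn(J)] = \{0\} \Rightarrow D = 0$ is essentially the whole content of the theorem, and the Lie\-/algebra mechanism $[D, \mathrm{ad}\,x] = \mathrm{ad}(Dx)$ has no one\-/line Jordan analogue, precisely because inner Jordan derivations are commutators $[R_{a}, R_{b}]$ rather than single multiplication operators. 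Jacobson's actual argument does not run this way; it goes through complete reducibility of the multiplication envelope (equivalently, a first\-/cohomology vanishing for separable Jordan algebras), which is exactly the step you defer. So the ``conceptually cleaner'' half of the proposal is a sketch with the hard point left open.

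The type\-/by\-/type completion is much closer to a proof, and for types A, C, D it essentially works: for type A, $[R_{a}, R_{b}] = \tfrac{1}{4}\,\mathrm{ad}([a,b])$ together with $\mathscr{A} = Z(\mathscr{A}) \oplus [\mathscr{A}, \mathscr{A}]$ does show every $D_{d}$ of Lemma \ref{le:2.3} is inner; for type C, computing $[R_{u_{i}}, R_{u_{j}}]$ (which sends $u_{j} \mapsto \alpha_{j}u_{i}$, $u_{i} \mapsto -\alpha_{i}u_{j}$ and kills the other basis vectors) recovers exactly the basis of $M$ in Lemma \ref{le:2.7}; for type D, $Inn(J) \neq 0$ because $[R_{a}, R_{b}](x)$ is an associator and $J$ is not associative, so simplicity of $Der(J)$ finishes it --- though note Lemma \ref{le:2.9} is stated only over algebraically closed fields, so over a general field of characteristic $0$ you additionally need that a form of $F_{4}$ is still simple. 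The genuine gap is type B. There the inner derivations are spanned by $[R_{a}, R_{b}] = -\tfrac{1}{4}D_{[a,b]}$ with $a, b \in H(\mathscr{A}, P)$, so what you actually need is that every $P$-skew $d$ agrees, modulo elements acting trivially on $H$, with an element of $[H, H]$ --- a Herstein\-/type statement about commutators of \emph{symmetric} elements, which does not follow from $\mathscr{A} = Z(\mathscr{A}) \oplus [\mathscr{A}, \mathscr{A}]$, whose commutators involve arbitrary elements. Indeed the naive inclusion $K \subseteq [H, H] + Z(\mathscr{A})$ can fail: for $M_{2}(\mathrm{F})$ with the symplectic involution one has $H = \mathrm{F}\cdot 1$, hence $[H, H] = 0$, while the skew elements form $sl_{2}$; the conclusion survives there only because all such $D_{d}$ act trivially on $H$. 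So the type-B case still requires a real argument that you have not supplied, and your proposal as written is an outline of two routes (one with an acknowledged hole, one with an unacknowledged one) rather than a complete proof.
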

According to Lemma \ref{le:3.5} and \ref{le:3.6}, we get a corollary with respect to Theorem \ref{thm:3.4}.
\begin{cor}\label{cor:3.7}
Suppose that $J$ is a semi-simple Jordan algebra with a finite basis over a field characteristic $0$. Then $TDer(J) = Der(J) = Inn(J)$.
\end{cor}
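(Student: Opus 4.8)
The plan is to derive the corollary by chaining the three immediately preceding results, Theorem \ref{thm:3.4}, Lemma \ref{le:3.5} and Lemma \ref{le:3.6}, since each supplies exactly one of the links in the asserted chain of equalities. No new computation should be needed; the whole task reduces to checking that the hypotheses of those results are met by a semi-simple Jordan algebra with a finite basis over a field of characteristic $0$.

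First I would verify the hypotheses of Theorem \ref{thm:3.4}. By Lemma \ref{le:3.5}, a semi-simple Jordan algebra over a field of characteristic $0$ possesses a (unique) unit $1$; and since $\mathrm{char}\,\mathrm{F} = 0$ implies in particular $\mathrm{char}\,\mathrm{F} \neq 2$, Theorem \ref{thm:3.4} applies verbatim and yields $TDer(J) = Der(J)$. Next I would apply Lemma \ref{le:3.6}: as $J$ is semi-simple with a finite basis over a field of characteristic $0$, every derivation of $J$ is inner, so $Der(J) = Inn(J)$. Concatenating the two identities gives $TDer(J) = Der(J) = Inn(J)$, which is exactly the statement to be proved.

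Since all of the substantive work has already been carried out in Theorem \ref{thm:3.4} and in the cited Lemmas \ref{le:3.5} and \ref{le:3.6}, there is no genuine obstacle in the argument itself. The only point deserving attention is the hypothesis-matching at the outset: Theorem \ref{thm:3.4} requires an ambient unit, and it is precisely Lemma \ref{le:3.5} that upgrades semi-simplicity (together with characteristic $0$) to the existence of such a unit. Thus the two standing hypotheses of the corollary, namely semi-simplicity and the existence of a finite basis, are enough to trigger both Theorem \ref{thm:3.4} and Lemma \ref{le:3.6} simultaneously, and the corollary follows as an immediate formal consequence.
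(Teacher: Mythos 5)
Your proposal is correct and is exactly the argument the paper intends: Lemma \ref{le:3.5} supplies the unit so that Theorem \ref{thm:3.4} (with $\mathrm{char}\,\mathrm{F} = 0 \neq 2$) gives $TDer(J) = Der(J)$, and Lemma \ref{le:3.6} gives $Der(J) = Inn(J)$. The paper states the corollary as an immediate consequence of these same three results, so your hypothesis-checking is the same route, just written out more carefully.
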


\section{Triple derivations of the derivation algebras of Jordan algebras}\label{se:4}

\begin{defn}\cite{Z1}
A linear map $D: L \rightarrow L$ where $L$ is a Lie algebra, is called a triple derivation on $L$ if it satisfies
\[D([[x, y], z]) = [[D(x), y], z] + [[x, D(y)], z] + [[x, y], D(z)],\quad\forall x, y, z \in L.\]
\end{defn}
\begin{lem}\cite{Z1}\label{le:4.2}
Let $L$ be a Lie algebra over commutative ring $\rm{R}$. If $\frac{1}{2} \in \rm{R}$, $L$ is perfect and has zero center, then we have that:
\begin{enumerate}[(1)]
\item $TDer(L) = Der(L)$;
\item $TDer(Der(L)) = ad(Der(L))$.
\end{enumerate}
\end{lem}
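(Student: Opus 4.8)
The plan is to treat the two assertions as instances of a single principle: a triple derivation is forced to respect brackets (to be a genuine derivation) as soon as the ambient Lie algebra is perfect and centerless, and then to obtain the second assertion by bootstrapping the same principle to the derivation algebra itself. Throughout I would use only that $L=[L,L]$, $Z(L)=\{0\}$, and $\tfrac12\in R$, together with the Jacobi identity.

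For (1), the inclusion $Der(L)\subseteq TDer(L)$ is immediate, so only the reverse is at issue. Given $D\in TDer(L)$ I would measure its failure to be a derivation by the Leibniz defect
\[
f(x,y)=D[x,y]-[Dx,y]-[x,Dy],
\]
which is bilinear and, since $[x,y]=-[y,x]$, antisymmetric. Expanding $D[[x,y],z]$ once through $f$ and comparing with the triple-derivation identity produces the single relation $[f(x,y),z]=-f([x,y],z)$ for all $x,y,z$. The target is then $f(x,y)\in Z(L)$, since $Z(L)=\{0\}$ would give $f\equiv 0$, i.e. $D\in Der(L)$. To extract centrality I would use perfectness: writing $u=\sum_i[a_i,b_i]$, the element $\sigma(u):=\sum_i f(a_i,b_i)$ is independent of the representation, because two representations of $u$ differ by something bracketing trivially against all of $L$, hence lying in $Z(L)=\{0\}$. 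This yields a well-defined linear map $\sigma$ with $f(u,v)=-[\sigma(u),v]$, and antisymmetry of $f$ upgrades the basic relation to the symmetry $[\sigma(u),v]=[u,\sigma(v)]$ together with $\sigma([u,v])=-[\sigma(u),v]$. Feeding these back into the Jacobi identity (here $\tfrac12\in R$ is used to divide) collapses the nested brackets $[u,[v,\sigma(w)]]$ and forces $\sigma(L)\subseteq Z(L)=\{0\}$, whence $f\equiv 0$.

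For (2) I would apply the same philosophy to $H:=Der(L)$. The preliminary facts are that $ad(L)\cong L$ is an ideal of $H$ (because $[\Delta,ad_x]=ad_{\Delta x}$) with trivial centralizer in $H$, and that $Z(H)=\{0\}$ (if $\Delta$ centralizes every $ad_x$ then $ad_{\Delta x}=0$, so $\Delta x\in Z(L)=\{0\}$). Given a triple derivation $\mathcal D$ of $H$, I would first run the argument of (1), now for $H$ whose perfectness and centerlessness have to be confirmed, to obtain $TDer(H)=Der(H)$, and then invoke the standard completeness result for derivation algebras of centerless Lie algebras, namely $Der(Der(L))=ad(Der(L))$. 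Chaining these gives $TDer(Der(L))=Der(Der(L))=ad(Der(L))$.

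The main obstacle is the centrality step in (1). The triple-derivation identity, even combined with Jacobi, yields only $[f(x,y),z]=-f([x,y],z)$, which is self-referential: substituting it into itself merely reproduces the Jacobi identity and no new constraint. The genuine content must come from perfectness, through the well-definedness of $\sigma$, together with centerlessness and $\tfrac12\in R$, and getting the nested-bracket bookkeeping to actually close is the delicate point. In (2) the parallel subtlety is verifying that $Der(L)$ can legitimately play the role of a perfect, centerless algebra so that both the argument of (1) and the completeness theorem apply.
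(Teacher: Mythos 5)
The paper never proves this lemma at all --- it is imported wholesale from Zhou \cite{Z1} --- so your proposal has to be measured against the proof in the literature. For part (1) your outline is correct, and it is in substance that proof: the defect $f(x,y)=D([x,y])-[D(x),y]-[x,D(y)]$ does satisfy $[f(x,y),z]=-f([x,y],z)$; perfectness together with $Z(L)=\{0\}$ makes $\sigma\bigl(\sum_i[a_i,b_i]\bigr)=\sum_i f(a_i,b_i)$ well defined; one then gets $f(u,v)=-[\sigma(u),v]$, $[\sigma(u),v]=[u,\sigma(v)]$ and $\sigma([u,v])=-[\sigma(u),v]$, and applying $\sigma$ to the Jacobi identity and simplifying with these relations yields $2[u,[v,\sigma(w)]]=0$; with $\frac12\in R$ and two applications of $Z(L)=\{0\}$ this gives $\sigma=0$, hence $f\equiv 0$ and $D\in Der(L)$. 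That part closes up exactly as you describe.

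Part (2) is where there is a genuine gap. Your plan --- run part (1) on $H=Der(L)$ and then cite a completeness theorem --- requires $H$ to be perfect, and you flag this as something ``to be confirmed''; the trouble is that it is not merely unverified but false in general, so this step cannot be patched. Concretely, take $L=\mathfrak{sl}_2(\mathbb{C})\otimes\mathbb{C}[t]/(t^2)$: it is perfect and centerless, but a short direct computation (derivations preserve the radical $\mathfrak{sl}_2\otimes t$, and Schur's lemma handles the rest) gives $Der(L)=ad(L)\oplus\mathbb{C}\,d$ with $d(x\otimes 1)=0$, $d(x\otimes t)=x\otimes t$, and then $[Der(L),Der(L)]\subseteq ad(L)\subsetneq Der(L)$, so $Der(L)$ is not perfect. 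The same objection hits your appeal to a ``standard completeness result for derivation algebras of centerless Lie algebras'': the classical theorem asserts completeness of $Der(L)$ when $L$ itself is perfect \emph{and} centerless (perfectness of $L$, not of $Der(L)$, is the hypothesis), and even granting it you would still owe the inclusion $TDer(H)\subseteq Der(H)$, which is precisely the point at issue. The argument that works never asks $H$ to be perfect: given $\mathcal{D}\in TDer(H)$, perfectness of $L$ gives $L=[[L,L],L]$, so $ad(L)$ is spanned by elements $ad_{[[a,b],c]}=[[ad_a,ad_b],ad_c]$, and since $ad(L)\lhd H$ all three terms of the triple-derivation identity lie in $ad(L)$; hence $\mathcal{D}(ad(L))\subseteq ad(L)$. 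Via $ad(L)\cong L$ the restriction is a triple derivation of $L$, so by part (1) it is a derivation, i.e. $\mathcal{D}(ad_x)=ad_{\delta(x)}$ for some $\delta\in Der(L)=H$. Then $\mathcal{E}=\mathcal{D}-[\delta,\,\cdot\,]$ is a triple derivation of $H$ vanishing on $ad(L)$, and $0=\mathcal{E}([[\Delta,ad_x],ad_y])=[[\mathcal{E}(\Delta),ad_x],ad_y]=ad_{[\mathcal{E}(\Delta)(x),\,y]}$ forces, using $Z(L)=\{0\}$ twice, $\mathcal{E}(\Delta)=0$ for every $\Delta\in H$. Hence $\mathcal{D}=[\delta,\,\cdot\,]\in ad(H)$, which is (2). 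In short: your engine (1) is fine, but in (2) it must be applied to $L$ through the ideal $ad(L)$, not to $Der(L)$ itself.
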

\begin{lem}\cite{E1}\label{le:4.3}
Any derivation of a semi-simple Lie algebra $L$ over a field of characteristic of $0$ is inner.
\end{lem}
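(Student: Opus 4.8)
The plan is to establish this classical fact via the Killing form, using that a finite-dimensional semi-simple Lie algebra over a field of characteristic $0$ has non-degenerate Killing form (Cartan's criterion) and trivial center. Write $\mathcal{D} = Der(L)$ and $\mathcal{I} = ad(L)$. The first step is to observe that $\mathcal{I}$ is an ideal of $\mathcal{D}$: for any $\delta \in \mathcal{D}$ and $x \in L$ one checks directly that $[\delta, ad(x)] = ad(\delta(x))$, so $[\mathcal{D}, \mathcal{I}] \subseteq \mathcal{I}$. Moreover, since $L$ is semi-simple its center $Z(L)$ is zero, so the map $ad : L \to \mathcal{I}$ is a Lie algebra isomorphism and $\mathcal{I}$ is itself semi-simple.

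Next I would compare the two Killing forms. Let $\kappa$ denote the Killing form of $\mathcal{D}$ and $\kappa_{\mathcal{I}}$ that of $\mathcal{I}$. Because $\mathcal{I}$ is an ideal of $\mathcal{D}$, a standard trace computation shows that $\kappa$ restricted to $\mathcal{I} \times \mathcal{I}$ coincides with $\kappa_{\mathcal{I}}$. Since $\mathcal{I} \cong L$ is semi-simple, $\kappa_{\mathcal{I}}$ is non-degenerate by Cartan's criterion, and hence so is the restriction of $\kappa$ to $\mathcal{I}$.

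Now set $\mathcal{I}^{\perp} = \{\delta \in \mathcal{D} \mid \kappa(\delta, \eta) = 0 \ \text{for all}\ \eta \in \mathcal{I}\}$. The invariance of the Killing form together with the fact that $\mathcal{I}$ is an ideal guarantees that $\mathcal{I}^{\perp}$ is also an ideal of $\mathcal{D}$. Non-degeneracy of $\kappa|_{\mathcal{I}}$ forces $\mathcal{I} \cap \mathcal{I}^{\perp} = \{0\}$, while the surjectivity of the map $\mathcal{D} \to \mathcal{I}^{*}$, $\delta \mapsto \kappa(\delta, -)|_{\mathcal{I}}$, gives $\dim \mathcal{I}^{\perp} = \dim \mathcal{D} - \dim \mathcal{I}$; together these yield the decomposition $\mathcal{D} = \mathcal{I} \oplus \mathcal{I}^{\perp}$. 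Finally, since $\mathcal{I}$ and $\mathcal{I}^{\perp}$ are ideals meeting only in $0$, we get $[\mathcal{I}^{\perp}, \mathcal{I}] \subseteq \mathcal{I} \cap \mathcal{I}^{\perp} = \{0\}$. Thus for $\delta \in \mathcal{I}^{\perp}$ and any $x \in L$ we have $0 = [\delta, ad(x)] = ad(\delta(x))$, and injectivity of $ad$ forces $\delta(x) = 0$ for all $x$, i.e. $\delta = 0$. Hence $\mathcal{I}^{\perp} = \{0\}$ and $\mathcal{D} = \mathcal{I}$, so every derivation of $L$ is inner.

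The step I expect to be the crux is the identification of $\kappa|_{\mathcal{I}}$ with $\kappa_{\mathcal{I}}$ together with its non-degeneracy: this is exactly where the ideal property, Cartan's criterion, and the finite-dimensionality of $L$ are all essential, and the remainder is linear-algebraic bookkeeping. An alternative route would invoke the first Whitehead lemma, namely $H^{1}(L, L) = 0$ for the adjoint module, which says precisely that derivations modulo inner derivations vanish; but the Killing-form argument above is more self-contained and avoids cohomological machinery.
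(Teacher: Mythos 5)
The paper offers no proof of this lemma at all --- it is quoted as a classical result with a citation to Cartan --- so there is no internal argument to compare yours against; what matters is whether your blind proof is sound, and it is. Your argument is the standard Killing-form proof: $ad(L)$ is an ideal of $Der(L)$ via the identity $[\delta, ad(x)] = ad(\delta(x))$; the Killing form of an ideal coincides with the restriction of the ambient Killing form (the trace computation you allude to works because $ad(x)ad(y)$ maps $Der(L)$ into $ad(L)$ for $x, y$ in the ideal); non-degeneracy of the Killing form of $ad(L) \cong L$ (Cartan's criterion, with injectivity of $ad$ coming from $Z(L) = 0$) then gives $\mathcal{I} \cap \mathcal{I}^{\perp} = \{0\}$, the rank--nullity count gives $\dim \mathcal{I}^{\perp} = \dim \mathcal{D} - \dim \mathcal{I}$, and the resulting splitting $\mathcal{D} = \mathcal{I} \oplus \mathcal{I}^{\perp}$ into ideals forces $[\mathcal{I}^{\perp}, \mathcal{I}] = \{0\}$, whence $\mathcal{I}^{\perp} = \{0\}$ by injectivity of $ad$. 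Every step is correctly justified. The one caveat worth flagging is that the whole argument requires $L$ to be finite-dimensional (Cartan's criterion and the dimension count both fail otherwise); you state this hypothesis at the outset, and it is implicit in how the paper uses the lemma (it is applied to derivation algebras of finite-dimensional Jordan algebras), but the lemma as literally stated omits it.
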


According to Lemma \ref{le:4.2} and \ref{le:4.3}, we have the following theorem.
\begin{thm}\label{thm:4.4}
Let $J$ be a simple Jordan algebra of type $A$ or $B$ where $L/Z(L)$ in Theorem \ref{thm:2.6} is simple or a simple Jordan algebra of type $C$ whose dimension isn't $5$ or a simple Jordan algebra of type $D$. Then $TDer(Der(J)) = Der(Der(J)) = ad(Der(J))$.
\end{thm}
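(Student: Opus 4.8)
The plan is to reduce the entire statement to the single fact that, under each of the three listed hypotheses, the Lie algebra $Der(J)$ is simple, and then to feed this into Lemmas \ref{le:4.2} and \ref{le:4.3}. First I would dispatch the case analysis. If $J$ is of type $A$ or $B$ with $L/Z(L)$ simple, then Theorem \ref{thm:2.6} gives that $Der(J)$ is simple; if $J$ is of type $C$ with $\dim(J) \neq 5$, then Theorem \ref{thm:2.8} gives the same conclusion; and if $J$ is of type $D$, then Theorem \ref{thm:2.10} identifies $Der(J)$ with the simple Lie algebra $F_{4}$. In every case, then, $\mathfrak{g} := Der(J)$ is a simple Lie algebra over a field of characteristic $0$.

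Next I would record the two structural consequences of simplicity that are required to invoke Lemma \ref{le:4.2}. Since $\mathfrak{g}$ is simple it is non-abelian, so its center $Z(\mathfrak{g})$ and its derived algebra $[\mathfrak{g}, \mathfrak{g}]$ are ideals which cannot equal $\{0\}$ and $\mathfrak{g}$ in the wrong direction; hence $Z(\mathfrak{g}) = \{0\}$ and $[\mathfrak{g}, \mathfrak{g}] = \mathfrak{g}$, that is, $\mathfrak{g}$ is perfect with zero center. As the base field has characteristic $0$ we also have $\frac{1}{2} \in \rm{F}$, so all the hypotheses of Lemma \ref{le:4.2} hold with $L = \mathfrak{g}$.

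With this preparation the two equalities follow directly. Applying Lemma \ref{le:4.2}(1) with $L = \mathfrak{g} = Der(J)$ yields $TDer(Der(J)) = Der(Der(J))$, which is the first equality. For the second, a simple Lie algebra is in particular semi-simple, so Lemma \ref{le:4.3} applies to $\mathfrak{g} = Der(J)$ and gives $Der(Der(J)) = ad(Der(J))$. Chaining the two equalities produces $TDer(Der(J)) = Der(Der(J)) = ad(Der(J))$, as claimed.

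I expect no serious obstacle in this argument: all the genuine content is already packaged in Theorems \ref{thm:2.6}, \ref{thm:2.8} and \ref{thm:2.10}, which establish the simplicity of $Der(J)$, and in the cited Lemmas \ref{le:4.2} and \ref{le:4.3}. The only steps demanding care are verifying (rather than silently assuming) the hypotheses of Lemma \ref{le:4.2} — namely that simplicity of $Der(J)$ forces both perfectness and a vanishing center — and observing that the semi-simplicity required by Lemma \ref{le:4.3} is weaker than, hence implied by, the simplicity already in hand. Worth noting is that one cannot instead obtain the result by applying Lemma \ref{le:4.2}(2) directly to $J$, since that lemma concerns Lie algebras whereas $J$ is a Jordan algebra; the passage to the Lie algebra $Der(J)$ is exactly what makes Lemma \ref{le:4.2} available.
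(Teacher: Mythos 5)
Your proof is correct and follows exactly the route the paper intends: the paper gives no written proof beyond ``According to Lemma \ref{le:4.2} and \ref{le:4.3}, we have the following theorem,'' and your argument supplies precisely the missing details — simplicity of $Der(J)$ via Theorems \ref{thm:2.6}, \ref{thm:2.8}, \ref{thm:2.10}, hence perfectness and zero center, then Lemma \ref{le:4.2}(1) applied to $L = Der(J)$ and Lemma \ref{le:4.3} for the inner-derivation equality. Your closing observation that Lemma \ref{le:4.2}(2) cannot be applied to $J$ itself (a Jordan, not Lie, algebra) correctly identifies why the lemma must be invoked at the level of $Der(J)$.
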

\begin{lem}\cite{M2}\label{le:4.5}
Suppose that $L$ is a Lie algebra and has a decomposition $L = L_{1} \oplus L_{2}$, where $L_{1}$, $L_{2}$ are ideals of $L$. Then
\begin{enumerate}[(1)]
\item $C(L) = C(L_{1}) \oplus C(L_{2})$;
\item If\; $C(L) = \{0\}$, then $Der(L) = Der(L_{1}) \oplus Der(L_{2})$ and $ad(L) = ad(L_{1}) \oplus ad(L_{2})$.
\end{enumerate}
\end{lem}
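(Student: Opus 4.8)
The plan is to prove this by mirroring the proof of Theorem \ref{thm:2.2}, since Lemma \ref{le:4.5} is its exact Lie-theoretic counterpart. The single structural fact that drives everything is that, because $L_{1}$ and $L_{2}$ are ideals with $L_{1} \cap L_{2} = \{0\}$, one has $[L_{1}, L_{2}] \subseteq L_{1} \cap L_{2} = \{0\}$; thus the bracket of $L$ splits completely along the two summands, exactly as the Jordan product did in Theorem \ref{thm:2.2}.

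For part (1), I would first note $C(L_{1}) \cap C(L_{2}) \subseteq L_{1} \cap L_{2} = \{0\}$. To see $C(L_{1}) + C(L_{2}) \subseteq C(L)$, take $z_{i} \in C(L_{i})$ and $x = x_{1} + x_{2} \in L$; then $[z_{1} + z_{2}, x] = [z_{1}, x_{1}] + [z_{2}, x_{2}] = 0$, the cross terms $[z_{1}, x_{2}]$ and $[z_{2}, x_{1}]$ vanishing by the splitting above. Conversely, for $a = a_{1} + a_{2} \in C(L)$ and any $x_{1} \in L_{1}$ one has $[a_{1}, x_{1}] = [a, x_{1}] = 0$, so $a_{1} \in C(L_{1})$, and symmetrically $a_{2} \in C(L_{2})$. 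This yields $C(L) = C(L_{1}) \oplus C(L_{2})$.

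For part (2) I would reproduce the four steps of Theorem \ref{thm:2.2}. Step 1: for $D \in Der(L)$ and $x_{1} \in L_{1}$, write $D(x_{1}) = u_{1} + u_{2}$; then $[u_{2}, x_{2}] = [D(x_{1}), x_{2}] = D([x_{1}, x_{2}]) - [x_{1}, D(x_{2})] = 0$ for all $x_{2} \in L_{2}$, so $u_{2} \in C(L_{2})$, and since $C(L) = \{0\}$ forces $C(L_{2}) = \{0\}$ by part (1), we get $u_{2} = 0$, i.e. $D(L_{1}) \subseteq L_{1}$. Steps 2 and 3 extend each $D_{i} \in Der(L_{i})$ by zero on the complementary summand to land in $Der(L)$, and conversely split an arbitrary $D$ as $D = D_{1} + D_{2}$ via $D_{i}(x_{1} + x_{2}) = D(x_{i})$, the restriction of $D$ to $L_{i}$ being a derivation by Step 1. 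Step 4 checks $Der(L_{i}) \lhd Der(L)$ by computing $[D, D_{1}](x_{2}) = 0$ for $D_{1} \in Der(L_{1})$, $x_{2} \in L_{2}$. This gives $Der(L) = Der(L_{1}) \oplus Der(L_{2})$ as Lie algebras. For the equality $ad(L) = ad(L_{1}) \oplus ad(L_{2})$, I would use that $C(L) = \{0\}$ makes $ad$ faithful, so the decomposition $L = L_{1} \oplus L_{2}$ transports to $ad(L)$: the operator $ad_{L}(x_{1})$ kills $L_{2}$ and acts as $ad_{L_{1}}(x_{1})$ on $L_{1}$; the intersection $ad(L_{1}) \cap ad(L_{2})$ is trivial because a common operator $ad_{L}(x_{1}) = ad_{L}(x_{2})$ sends $L_{1}$ both into $L_{1}$ and to $0$, forcing $x_{1} \in C(L) = \{0\}$; and $[ad_{L}(x), ad_{L}(x_{1})] = ad_{L}([x, x_{1}])$ with $[x, x_{1}] \in L_{1}$ supplies the ideal property.

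The computations are all routine once the splitting $[L_{1}, L_{2}] = \{0\}$ is in hand; the only place requiring genuine care is the repeated invocation of the hypothesis $C(L) = \{0\}$, which must be combined with part (1) to conclude $C(L_{1}) = C(L_{2}) = \{0\}$. This is precisely what kills the stray component $u_{2}$ in Step 1 and guarantees the faithfulness of $ad$ needed for the second equality in part (2). I do not expect any single step to be a substantial obstacle; the main point is simply to organise the verification so that the center-zero hypothesis is deployed exactly where the Jordan-algebraic proof of Theorem \ref{thm:2.2} used it.
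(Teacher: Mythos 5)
The paper never proves Lemma \ref{le:4.5}: it is imported from \cite{M2} as a known result, so there is no in-paper argument to compare against. Judged on its own, your proof is correct, and it is the natural one --- an exact Lie-algebra transcription of the paper's proof of Theorem \ref{thm:2.2}, step for step: the splitting $[L_{1}, L_{2}] \subseteq L_{1} \cap L_{2} = \{0\}$, the two inclusions for the center, then the four steps (invariance $D(L_{i}) \subseteq L_{i}$, extension by zero, splitting $D = D_{1} + D_{2}$, the ideal property). The only content of Lemma \ref{le:4.5} with no counterpart in Theorem \ref{thm:2.2} is the decomposition $ad(L) = ad(L_{1}) \oplus ad(L_{2})$, and your treatment of it is sound: $ad_{L}(x_{1})$ kills $L_{2}$ and restricts to $ad_{L_{1}}(x_{1})$ on $L_{1}$; the intersection is trivial because an operator $ad_{L}(x_{1}) = ad_{L}(x_{2})$ annihilates both $L_{1}$ and $L_{2}$, forcing $x_{1} \in C(L) = \{0\}$; and $[ad_{L}(x), ad_{L}(x_{1})] = ad_{L}([x, x_{1}])$ with $[x, x_{1}] \in L_{1}$ gives the ideal property.

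One small repair in your Step 1: the chain $[D(x_{1}), x_{2}] = D([x_{1}, x_{2}]) - [x_{1}, D(x_{2})] = 0$ is not literally a computation of zero, since at that stage $[x_{1}, D(x_{2})]$ is not yet known to vanish ($D(x_{2})$ need not lie in $L_{2}$ before invariance is established). The correct one-line justification --- the same one the paper uses in Step 1 of Theorem \ref{thm:2.2} --- is that $[D(x_{1}), x_{2}] \in L_{2}$ and $[x_{1}, D(x_{2})] \in L_{1}$ because $L_{2}$ and $L_{1}$ are ideals, while their sum equals $D([x_{1}, x_{2}]) = D(0) = 0$, so each lies in $L_{1} \cap L_{2} = \{0\}$. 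With that sentence inserted, the proof is complete.
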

\begin{lem}\label{le:4.6}
Suppose that $L$ is a Lie algebra and has a decomposition $L = L_{1} \oplus L_{2}$, where $L_{1}$, $L_{2}$ are ideals of $L$. If\; $C(L) = \{0\}$, then $TDer(L) = TDer(L_{1}) \oplus TDer(L_{2})$.
\end{lem}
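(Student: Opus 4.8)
The plan is to follow the same four-step strategy used in the proof of Theorem \ref{thm:2.2} (and its Lie-algebra counterpart Lemma \ref{le:4.5}), adapted to the triple-derivation identity. First I would record the consequence of Lemma \ref{le:4.5}(1) that $C(L) = \{0\}$ forces $C(L_1) = C(L_2) = \{0\}$; this is the structural input that makes the whole argument run. Throughout I use that $L = L_1 \oplus L_2$ as ideals gives $[L_1, L_2] = 0$, so every bracket of mixed components vanishes and, in particular, $[[x,y],z] = [[x_1,y_1],z_1] + [[x_2,y_2],z_2]$ for $x = x_1 + x_2$, etc.

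The crucial and hardest step is Step 1: showing $D(L_i) \subseteq L_i$ for every $D \in TDer(L)$. Here I would test the triple-derivation identity on the well-chosen arguments $x = x_1 \in L_1$ and $y, z \in L_2$. Since $[x_1, y] = 0$, the left-hand side $D([[x_1,y],z])$ vanishes; on the right-hand side the second and third terms vanish as well (the third because $[x_1,y]=0$, the second because $[x_1, D(y)] \in L_1$ while $z \in L_2$). Writing $D(x_1) = u_1 + u_2$ with $u_i \in L_i$, only the first term survives and reduces to $[[u_2, y], z]$, so one obtains $[[u_2, y], z] = 0$ for all $y, z \in L_2$. Fixing $y$ and letting $z$ range over $L_2$ gives $[u_2, y] \in C(L_2) = \{0\}$, and then $[u_2, y] = 0$ for all $y$ gives $u_2 \in C(L_2) = \{0\}$; thus $D(x_1) \in L_1$, and symmetrically $D(L_2) \subseteq L_2$. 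I expect this to be the main obstacle, precisely because the triple product must be chosen so that both the unwanted terms collapse and the surviving term exposes the central element $u_2$.

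Once invariance is in hand, Steps 2 and 3 are routine mirrors of Theorem \ref{thm:2.2}. In Step 2 I would extend any $D_1 \in TDer(L_1)$ to $L$ by setting $D_1|_{L_2} = 0$ and verify directly, using the decomposition of $[[x,y],z]$ above, that the extension lies in $TDer(L)$; since such extensions vanish on $L_2$ (resp.\ $L_1$) one also gets $TDer(L_1) \cap TDer(L_2) = \{0\}$, so that $TDer(L_1) \dotplus TDer(L_2) \subseteq TDer(L)$. In Step 3, for arbitrary $D \in TDer(L)$ I would set $D_1(x_1 + x_2) = D(x_1)$ and $D_2(x_1 + x_2) = D(x_2)$; Step 1 makes these well defined with values in $L_1$ and $L_2$ respectively, $D = D_1 + D_2$, and restricting the triple-derivation identity to each $L_i$ shows $D_i \in TDer(L_i)$, giving $TDer(L) = TDer(L_1) \dotplus TDer(L_2)$.

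Finally, Step 4 upgrades this to a direct sum of ideals. For $D_1 \in TDer(L_1)$ and $D \in TDer(L)$, evaluating $[D, D_1]$ on any $x_2 \in L_2$ gives $D(D_1(x_2)) - D_1(D(x_2)) = 0$, since $D_1(x_2) = 0$ and $D(x_2) \in L_2$ is again killed by $D_1$; hence $[D, D_1]$ vanishes on $L_2$ and lies in $TDer(L_1)$, so $TDer(L_1) \lhd TDer(L)$, and symmetrically $TDer(L_2) \lhd TDer(L)$. This yields $TDer(L) = TDer(L_1) \oplus TDer(L_2)$, as claimed.
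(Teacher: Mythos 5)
Your proposal is correct and follows essentially the same four-step strategy as the paper's proof: establish $D(L_i)\subseteq L_i$ by testing the triple-derivation identity on mixed arguments and using $C(L_1)=C(L_2)=\{0\}$, then extend, decompose, and verify the ideal property. The only cosmetic difference is in Step 1, where you restrict $y,z$ to $L_2$ and invoke $C(L_2)=\{0\}$ twice, whereas the paper lets $z$ range over all of $L$ and first uses $C(L)=\{0\}$; both routes are valid and equally short.
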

\begin{proof}
$\bf{Step 1}$. We'll show that for $i = 1, 2$, $D(L_{i}) \subseteq L_{i},\;\forall D \in TDer(L)$.

Suppose that $x_{i} \in L_{i}(i = 1, 2)$. For any $z \in L$, we have
\[D([[x_{1}, x_{2}], z]) = [[D(x_{1}), x_{2}], z] + [[x_{1}, D(x_{2})], z] + [[x_{1}, x_{2}], D(z)],\]
which implies that
\[[[D(x_{1}), x_{2}], z] + [[x_{1}, D(x_{2})], z] = 0,\]
i.e,
\[[[D(x_{1}), x_{2}], z] = -[[x_{1}, D(x_{2})], z] \in L_{1} \cap L_{2} = 0.\]
Hence,
\[[D(x_{1}), x_{2}] \in C(L), [x_{1}, D(x_{2})] \in C(L).\]
Note that $C(L) = \{0\}$, we have
\[[D(x_{1}), x_{2}] = [x_{1}, D(x_{2})] = 0.\]
Suppose that $D(x_{1}) = u_{1} + u_{2}$ where $u_{i} \in L_{i}(i = 1, 2)$. Then we have
\[[u_{2}, x_{2}] = [u_{1} + u_{2}, x_{2}] = [D(x_{1}), x_{2}] = 0,\]
which implies that $u_{2} \in C(L_{2})$.
According to Lemma \ref{le:4.5}, $C(L_{i}) = \{0\}(i = 1, 2)$. Therefore $u_{2} = 0$, which is to say $D(L_{1}) \subseteq L_{1}$.

Similarly, we have $D(L_{2}) \subseteq L_{2}$.

$\bf{Step 2}$. we'll show that $TDer(L_{1}) \dotplus TDer(L_{2}) \subseteq TDer(L)$.

For any $D \in TDer(L_{1})$, we extend it to a linear map on $L$ as follow
\[D(x_{1} + x_{2}) = x_{1},\quad\forall x_{1} \in L_{1}, x_{2} \in L_{2}.\]

Then for any $x, y, z \in L$, suppose that $x = x_{1} + x_{2}$, $y = y_{1} + y_{2}$, $z = z_{1} + z_{2}$, where $x_{1}, y_{1}, z_{1} \in L_{1}$, $x_{2}, y_{2}, z_{2} \in L_{2}$, we have
\[D([[x, y], z]) = D([[x_{1} + x_{2}, y_{1} + y_{2}], z_{1} + z_{2}]) = D([[x_{1}, y_{1}], z_{1}] + [[x_{2}, y_{2}], z_{2}]) = D([[x_{1}, y_{1}], z_{1}]),\]
\begin{align*}
&[[D(x), y], z] + [[x, D(y)], z] + [[x, y], D(z)]\\
&= [[D(x_{1} + x_{2}), y_{1} + y_{2}], z_{1} + z_{2}] + [[x_{1} + x_{2}, D(y_{1} + y_{2})], z_{1} + z_{2}] + [[x_{1} + x_{2}, y_{1} + y_{2}],\\
&  D(z_{1} + z_{2})]\\
&= [[D(x_{1}), y_{1}], z_{1}] + [[x_{1}, D(y_{1})], z_{1}] + [[x_{1}, y_{1}], D(z_{1})].
\end{align*}
Since $D \in TDer(L_{1})$, we have
\[D([[x_{1}, y_{1}], z_{1}]) = [[D(x_{1}), y_{1}], z_{1}] + [[x_{1}, D(y_{1})], z_{1}] + [[x_{1}, y_{1}], D(z_{1})].\]
Hence,
\[D([[x, y], z]) = [[D(x), y], z] + [[x, D(y)], z] + [[x, y], D(z)],\]
which implies that $D \in TDer(L)$, i.e, $TDer(L_{1}) \subseteq TDer(L)$. Moreover, $D \in TDer(L_{1})$ if and only if $D(x_{2}) = 0,\; \forall x_{2} \in L_{2}$.

Similarly, we have $TDer(L_{2}) \subseteq TDer(L)$ and $D \in TDer(L_{2})$ if and only if $D(x_{1}) = 0,\; \forall x_{1} \in L_{1}$.

Then we have $TDer(L_{1}) + TDer(L_{2}) \subseteq TDer(L)$ and $TDer(L_{1}) \cap TDer(L_{2}) = \{0\}$. Hence, $TDer(L_{1}) \dotplus TDer(L_{2}) \subseteq TDer(L)$.

$\bf{Step 3}$. We'll prove that $TDer(L_{1}) \dotplus TDer(L_{2}) = TDer(L)$.

Suppose that $D \in TDer(L)$. Set $x = x_{1} + x_{2}, x_{i} \in L_{i}$. Define $D_{1}, D_{2}$ as follows
\begin{equation}
\left\{
\begin{aligned}
D_{1}(x_{1} + x_{2}) = D(x_{1}),\\
D_{2}(x_{1} + x_{2}) = D(x_{2}).
\end{aligned}
\right.
\end{equation}
Obviously, $D = D_{1} + D_{2}$.

For any $u_{1}, v_{1}, w_{1} \in L_{1}$,
\begin{align*}
&D_{1}([[u_{1}, v_{1}], w_{1}]) = D([[u_{1}, v_{1}], w_{1}]) = [[D(u_{1}), v_{1}], w_{1}] + [[u_{1}, D(v_{1})], w_{1}] + [[u_{1}, v_{1}], D(w_{1})]\\
&= [[D_{1}(u_{1}), v_{1}], w_{1}] + [[u_{1}, D_{1}(v_{1})], w_{1}] + [[u_{1}, v_{1}], D_{1}(w_{1})].
\end{align*}
Hence, $D_{1} \in TDer(L_{1})$. Similarly, $D_{2} \in TDer(L_{2})$.

Therefore, $TDer(L) = TDer(L_{1}) \dotplus TDer(L_{2})$.

$\bf{Step 4}$. We'll show that $TDer(L_{i}) \lhd TDer(L)$. Suppose that $D_{1} \in TDer(L_{1}), D \in TDer(L), x_{2} \in L_{2}$.
\[[D, D_{1}](x_{2}) = DD_{1}(x_{2}) - D_{1}D(x_{2}) = 0,\]
which implies that $[D, D_{1}] \in TDer(L_{1})$, i.e., $TDer(L_{1}) \lhd TDer(L)$. Similarly, $TDer(L_{2}) \lhd TDer(L)$.

Therefore, we have $TDer(L) = TDer(L_{1}) \oplus TDer(L_{2})$.
\end{proof}
\begin{thm}\label{thm:4.7}
Suppose that $J$ is a centerless semi-simple Jordan algebra over a field of characteristic $0$ and $J$ has the decomposition $J = \oplus^{s}_{i = 1}J_{i}$ where $J_{i}(1 \leq i \leq s)$ are simple Jordan algebras over a field of characteristic $0$ satisfying $Der(J_{i})$ is simple. Then $TDer(Der(J)) = Der(Der(J)) = ad(Der(J))$.
\end{thm}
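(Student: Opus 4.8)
The plan is to reduce everything to the simple pieces supplied by the decomposition $J = \oplus_{i=1}^{s} J_{i}$ and then invoke Theorem \ref{thm:4.4}. First I would pass from $J$ to $L := Der(J)$. Since $J$ is centerless, Theorem \ref{thm:2.2}(2) applies; iterating it along $J = J_{1} \oplus (J_{2} \oplus \cdots \oplus J_{s})$ — and using part (1) of the same theorem to guarantee that each partial sum $\oplus_{i \geq k} J_{i}$ is again centerless, so that the hypothesis of part (2) is met at every stage — yields
\[
Der(J) = \bigoplus_{i=1}^{s} Der(J_{i}).
\]
By hypothesis each $L_{i} := Der(J_{i})$ is a simple Lie algebra, so $L = \oplus_{i=1}^{s} L_{i}$ is a direct sum of simple ideals; in particular $L$ is semi-simple with $C(L) = \{0\}$.

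Next I would compute the three objects $TDer(L)$, $Der(L)$ and $ad(L)$ by decomposing each along $L = \oplus_{i} L_{i}$. Because every $L_{i}$ is simple, each partial sum $\oplus_{i \geq k} L_{i}$ has zero center, so the centerless hypothesis of Lemmas \ref{le:4.5} and \ref{le:4.6} holds at each stage of the induction. Iterating Lemma \ref{le:4.6} then gives $TDer(L) = \oplus_{i} TDer(L_{i})$, while iterating Lemma \ref{le:4.5}(2) gives $Der(L) = \oplus_{i} Der(L_{i})$ and $ad(L) = \oplus_{i} ad(L_{i})$. Now each $L_{i} = Der(J_{i})$ is the derivation algebra of a simple Jordan algebra whose derivation algebra is simple, so Theorem \ref{thm:4.4} applies to $J_{i}$ and gives
\[
TDer(Der(J_{i})) = Der(Der(J_{i})) = ad(Der(J_{i})),
\]
that is, $TDer(L_{i}) = Der(L_{i}) = ad(L_{i})$ for each $i$. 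Summing these equalities over $i$ yields $TDer(L) = Der(L) = ad(L)$, which is the assertion.

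A cleaner alternative bypasses the piecewise Theorem \ref{thm:4.4} altogether: once we know $L = Der(J)$ is a semi-simple Lie algebra, it is automatically perfect and centerless, so Lemma \ref{le:4.2}(1) gives $TDer(L) = Der(L)$ directly, and Lemma \ref{le:4.3} gives $Der(L) = ad(L)$ since $L$ is semi-simple over a field of characteristic $0$; chaining these two equalities finishes the proof. I expect the only real subtlety to be the inductive bookkeeping in the first step: Theorem \ref{thm:2.2}, Lemma \ref{le:4.5} and Lemma \ref{le:4.6} are all stated for a two-term decomposition, so extending them to the $s$-fold sum requires an induction in which one repeatedly checks, via the center-decomposition parts of those statements, that the complementary summand remains centerless and the hypotheses stay valid. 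Everything else is a routine concatenation of the established equalities.
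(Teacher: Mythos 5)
Your first argument is, step for step, the paper's own proof: use Theorem \ref{thm:2.2} to get $Der(J)=\oplus_{i=1}^{s}Der(J_{i})$, deduce $C(Der(J))=\{0\}$ from simplicity of the summands via Lemma \ref{le:4.5}(1), split $TDer$, $Der$ and $ad$ along this decomposition by Lemmas \ref{le:4.5} and \ref{le:4.6}, and conclude with Theorem \ref{thm:4.4} on each summand. The inductive bookkeeping you flag (those results are stated for two-term decompositions) is indeed suppressed in the paper, and your observation that the partial sums remain centerless is exactly what makes the iteration legitimate.

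Your alternative route is also correct and is genuinely more economical than what the paper does. Since each $Der(J_{i})$ is simple, $Der(J)=\oplus_{i=1}^{s}Der(J_{i})$ is a direct sum of simple ideals, hence perfect, centerless and semi-simple; Lemma \ref{le:4.2}(1) applied to $L=Der(J)$ then gives $TDer(Der(J))=Der(Der(J))$, and Lemma \ref{le:4.3} gives $Der(Der(J))=ad(Der(J))$. This bypasses Lemmas \ref{le:4.5} and \ref{le:4.6} and the componentwise use of Theorem \ref{thm:4.4} entirely: in effect you apply, once and globally, the same two lemmas that the paper invokes to prove Theorem \ref{thm:4.4}, instead of applying that theorem $s$ times and reassembling the pieces. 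The only thing the longer route yields in addition is the explicit direct-sum decompositions of $TDer(Der(J))$, $Der(Der(J))$ and $ad(Der(J))$, which are not needed for the statement being proved.
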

\begin{proof}
According to Theorem \ref{thm:2.2}, we have $Der(J) = \oplus^{s}_{i = 1}Der(J_{i})$ and $Der(J_{i}) \triangleleft Der(J)$.

By Lemma \ref{le:4.5}, we have $C(Der(J)) = \oplus^{s}_{i = 1}C(Der(J_{i}))$. Since $Der(J_{i})$ is simple, we have $C(Der(J_{i})) = \{0\}$. Hence, $C(Der(J)) = \{0\}$.

Therefore, we have
\[TDer(Der(J)) = \oplus^{s}_{i = 1}TDer(Der(J_{i})),\]
\[Der(Der(J)) = \oplus^{s}_{i = 1}Der(Der(J_{i})),\]
\[ad(Der(J)) = \oplus^{s}_{i = 1}ad(Der(J_{i})),\]
via Lemma \ref{le:4.5} and Lemma \ref{le:4.6}.

According to Theorem \ref{thm:4.4}, we have
\[TDer(Der(J_{i})) = Der(Der(J_{i})) = ad(Der(J_{i})),\quad\forall 1 \leq i \leq s.\]
Hence, we have
\[TDer(Der(J)) = Der(Der(J)) = ad(Der(J)).\]
\end{proof}
\begin{re}
Let $J$ be a simple Jordan algebra of type $C$. Obviously, $J$ is semi-simple. But $C(J) \neq \{0\}$ since $1 \in C(J)$.
\end{re}

\end{document}